\theoremstyle{plain}
\newtheorem{theorem}{Theorem}
\newtheorem*{theorem*}{Theorem}
\newtheorem{corollary}[theorem]{Corollary}
\newtheorem{lemma}[theorem]{Lemma}
\theoremstyle{definition}
\theoremstyle{remark}
\newtheorem*{remark}{Remark}
\numberwithin{equation}{section}
\newcommand{\N}{\mathbb N} 
\newcommand{\R}{\mathbb R} 
\newcommand{\M}{{\mathscr M}}
\newcommand{\wto}{\rightharpoonup}
\newcommand{\E}{{\mathcal E}}
\newcommand{\W}{{\mathcal W}}
\renewcommand{\H}{{\mathcal H}}
\newcommand{\spt}{\operatorname{spt}}
\newcommand{\cc}{\Subset}
\newcommand{\eps}{\varepsilon}
\newcommand{\dx}{\,\mathrm{d}x}
\renewcommand{\d}{\,\mathrm{d}}
 \newcommand{\ds}{\,\mathrm{dA}}
 \newcommand{\B}{\mathcal B}
\begin{document}

\title[Helfrich's Energy and Constrained Minimisation]{Helfrich's Energy and Constrained Minimisation}

\author{Stephan Wojtowytsch}
\address{Stephan Wojtowytsch\\Department of Mathematical Sciences\\Durham University\\Durham DH1\,1PT, United Kingdom}
\email{s.j.wojtowytsch@durham.ac.uk}

\date{\today}

\subjclass[2010]{49Q10, 49Q20, 53C80}
\keywords{Helfrich energy, Willmore energy, constrained minimisation, topological type, varifold}

\begin{abstract}
For every $g\in\mathbb{N}_0$ and $\epsilon>0$, we construct a smooth genus $g$ surface embedded into the unit ball with area $8\pi$ and Willmore energy smaller than $8\pi + \epsilon$. From this we deduce that a minimising sequence for Willmore's energy in the class of genus $g$ surfaces embedded in the unit ball with area $8\pi$ converges to a doubly covered sphere for all $g\in\N_0$.  We obtain the same result for certain Canham-Helfrich energies with $\chi_K\leq 0$ without genus constraint and show that Canham-Helfrich energies with $\chi_K>0$ are not bounded from below in the class of smooth surfaces with area $S$ embedded into a domain $\Omega\Subset \R^3$.

Furthermore, we prove that the class of connected surfaces embedded in a domain $\Omega\Subset\R^3$ with uniformly bounded Willmore energy and area is compact under varifold convergence.  
\end{abstract}

\maketitle

\section{Introduction and Main Results}

Let $M$ be a closed $C^2$-surface embedded in $\R^3$. Then the Canham-Helfrich energy of $M$ is defined by
\[
\E(M) =  \int_M \chi_H\,(H-H_0)^2 + \chi_K\, K\d\H^2
\]
where $H$ and $K$ are the mean and Gaussian curvatures of $M$ respectively and $\chi_H, \chi_K$ and $H_0$ are material parameters. This bending energy is commonly used in the modelling of biological membranes. The most commonly studied case corresponds to $\chi_H \equiv \frac14$ and $\chi_K, H_0 \equiv 0$ and is known as Willmore's energy
\[
\W(M) = \frac14\int_M|H|^2 \d\H^2.
\]
A well-known (non-compact) example of Gro\ss e-Brauckmann shows that if $H_0\neq 0$, $\E$ need not be lower semi-continuous under varifold convergence. Namely, in \cite{grosse1993new} a sequence of surfaces $M_k$ is constructed which converges to a multiplicity two plane in a measure sense and satisfies $H_{M_k}\equiv 1$ for all $k\in\N$. If $H_0=0$, the energy does not depend on the orientation of $M$.

The parameter $\chi_H$ must be positive to obtain an energy bound from below. Due to the Gauss-Bonnet theorem, if $\chi_K$ is constant the second term in the Helfrich functional is of topological nature as $\int_MK\d\H^2 = 4\pi(1-g)$ where $g\in\N_0$ is the genus of the surface $M$. Thus when Helfrich's energy is supposed to be minimised in a certain genus class, the second term is usually dropped. We investigate the influence of prescribed topological genus on the minimisation problem for Willmore's energy and of the parameter $\chi_K$ on Helfrich's energy without prescribed genus. 

Let $g\in \N_0$, $S>0$ and $\Omega\subset\R^3$ open. Denote by $\M_{g,S,\Omega}$ the space of closed connected orientable genus $g$ surfaces which are $C^2$-embedded in $\Omega$ with surface area $S$ and by $\M_{S,\Omega}$ the union of all $\M_{g,S,\Omega}$ over $g\in\N_0$. Our main result is the following.

\begin{theorem}\label{theorem approximation}
Let $m\in\N$, $m\geq 2$, $g\in \N_0$ and $\eps>0$. Then there exists $M\in \M_{g,4\pi m, B_1(0)}$ such that
\[
\W(M) < 4\pi m + \eps.
\]
\end{theorem}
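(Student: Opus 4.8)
The plan is to realise $M$ as $m$ nested, nearly unit round spheres, tied together into a single genus $g$ surface by thin minimal-type necks, with the area corrected to exactly $4\pi m$ only at the very end. Fix a small parameter $\eta>0$ (to be chosen last) and place concentric spheres $\Sigma_1,\dots,\Sigma_m$ of radii $r_j=1-\eta-(j-1)\delta$, where $\delta>0$ is small enough that the $\Sigma_j$ are pairwise disjoint and contained in $B_1(0)$. Each round sphere satisfies $\W(\Sigma_j)=4\pi$, so the disjoint union has Willmore energy exactly $4\pi m$ and area $4\pi\sum_j r_j^2$, which is strictly below $4\pi m$ but converges to $4\pi m$ as $\eta,\delta\to0$. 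Thus the base configuration already has the correct energy up to a small error and a positive area deficit $D_0:=4\pi m-4\pi\sum_j r_j^2$ that can be made as small as we please.

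Next I would connect the spheres by inserting $N=m-1+g$ necks into the radial gaps between consecutive sheets. An Euler characteristic count gives $\chi=2m-2N=2-2g$, so a spanning family of $m-1$ necks renders the surface connected (and, being built from spheres and tubes, orientable), while the remaining $g$ necks each add one handle, producing the prescribed genus. The necks are modelled on rescaled catenoids: since two concentric spheres at distance $\delta$ look locally like parallel planes, a truncated catenoid bridges them, and being a minimal surface it contributes no Willmore energy, so that only the annular region interpolating between catenoid and sphere carries energy. The crux of the argument is the quantitative neck estimate: one must show that each neck can be built with total Willmore energy below $\eps/(2N)$, that it stays embedded, of class $C^2$, and inside $B_1(0)$, and — crucially, since $m\geq2$ guarantees at least one neck is present — that by enlarging its collar it supplies a prescribed, continuously varying amount of extra area at a Willmore cost that still tends to $0$. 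This is where the analysis is delicate, since the catenoidal collar must flare enough to carry area without either leaving the shell between the sheets or forcing the transition region to become curved and expensive; this estimate, rather than any topological or variational input, is the main obstacle.

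Granting the neck estimate, the construction concludes by a one-dimensional continuity argument. Letting a single parameter $t$ control the common size of the area-carrying collars, I obtain a family $M_t$ of embedded genus $g$ surfaces in $B_1(0)$ with $\W(M_t)<4\pi m+\eps$ throughout, whose area depends continuously on $t$ and increases from a value just below $4\pi m$ (small collars, total area close to $4\pi\sum_j r_j^2$) to a value above $4\pi m$ (larger collars supplying more area than the deficit $D_0$). The intermediate value theorem then selects $t^\ast$ with $\mathrm{area}(M_{t^\ast})=4\pi m$ exactly, and $M:=M_{t^\ast}$ lies in $\M_{g,4\pi m,B_1(0)}$ with $\W(M)<4\pi m+\eps$, as required. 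Once the necks are shown to be simultaneously cheap, embedded, contained, and area-tunable, the remainder is bookkeeping: the $\eta,\delta$ are fixed so that $D_0$ and the total neck energy are small, and the continuity argument fixes the area on the nose.
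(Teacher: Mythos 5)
Your outline follows the same strategy as the paper: nest $m$ nearly-unit spheres, join them by $m-1+g$ catenoidal necks (your Euler characteristic count matches the paper's), and fix the area at the end. But there is a genuine gap, and you name it yourself: the ``quantitative neck estimate'' is essentially the entire analytic content of the theorem, and you assume it rather than prove it. The difficulty is not that a truncated catenoid is cheap --- that is automatic, since it is minimal --- but that its ends must be glued to the round spheres in a $C^2$, embedded way with quantified Willmore cost. A catenoid end is not planar (its height grows logarithmically in the radius), and the sphere is curved, so an interpolation region is unavoidable, and estimating its energy is where all the work lies. The paper resolves this by flattening \emph{both} pieces before gluing: Lemma~\ref{lemma flattening sphere} replaces a polar cap of the sphere by a genuinely flat disc at arbitrarily small Willmore cost (proved via a $C^1$/$W^{2,2}$ perturbation estimate for graphs, using that the Willmore integrand of a graph is continuous in this topology), and Lemma~\ref{lemma flattening catenoid} bends the catenoid's profile curve so that its two ends are \emph{exactly} planar, with $\W(\Sigma)=O(e^{-2R})$ computed explicitly. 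Flat then matches flat, the gluing is trivial, and the energy bookkeeping is a finite sum. Without statements playing the role of these two lemmas, your construction does not produce a $C^2$ surface with controlled energy.

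Your area-fixing mechanism is also both unproven and harder than necessary. You confine the area-carrying collars to the shell of thickness $\delta$ between consecutive spheres and to disjoint small cylinders, yet they must absorb a deficit of order $\eta+\delta$ (with $m$-dependent constants); a shallow bump of height $a\leq\delta$ and width $w$ gains area of order $a^2$ at Willmore cost of order $a^2/w^2$, so making this cheap forces a nontrivial hierarchy among $\eta$, $\delta$ and the collar radii --- exactly the kind of estimate you have deferred. The paper's route is simpler: Willmore energy is scale invariant, so if $\H^2(M)>4\pi m$ one dilates the whole surface by $\sqrt{4\pi m/\H^2(M)}<1$, which fixes the area exactly, costs nothing, and pushes the surface strictly inside $B_1(0)$. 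If instead the area falls short (by at most $C\delta^2$ in the paper's parametrisation), a single bump $t\,h(x/(\alpha\sqrt{t}))$ at the south pole of the innermost sphere gains area $c\,t^2$ at Willmore cost $C\,t$ (the estimate quoted from \cite{Muller:2013vz}), after which one rescales as before. Your intermediate value argument could be made rigorous, but only after proving the same kind of quantitative collar estimate, so it saves nothing over the scaling argument.
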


For the proof, we show that we can connect two concentric spheres with almost equal radii by a large number of catenoids. This does not change the area or Willmore's energy much since catenoids are minimal surfaces, but changes the topology to arbitrary genus. A further perturbation with small Willmore energy adds a sufficient amount of area. The argument is similar to \cite{Muller:2013vz}, where two spheres were connected by one catenoid. Our construction is more analytic than geometric and allows for any finite number of catenoids, whereas the construction of \cite{Muller:2013vz} requires (almost) a whole hemisphere per catenoid.

This has important implications for curvature energies.

\begin{corollary}\label{theorem willmore}
Let $g\in \N_0, m\in\N, m\geq 2$ and consider $\Omega = B_1(0)$, $S= 4m\pi$. Then every sequence $M_k\in \M_{g, S,\Omega}$ such that
\[
\W(M_k)\to \inf\left\{\W(M)\:|\:M\in \M_{g, S,\Omega}\right\} = 4\pi m
\]
converges to an $m$-fold covered unit sphere as varifolds, independently of $g$.
\end{corollary}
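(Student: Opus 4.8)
The plan is to prove Corollary~\ref{theorem willmore} by combining the approximation result of Theorem~\ref{theorem approximation} with the lower semicontinuity and compactness theory for Willmore's energy under varifold convergence. The first observation is that Theorem~\ref{theorem approximation} immediately yields the value of the infimum: since for every $\eps>0$ there exists $M\in\M_{g,4\pi m,B_1(0)}$ with $\W(M)<4\pi m+\eps$, we get $\inf\{\W(M)\mid M\in\M_{g,S,\Omega}\}\leq 4\pi m$. For the reverse inequality I would invoke the Li--Yau inequality, which states that any surface passing through a point with multiplicity $k$ has Willmore energy at least $4\pi k$; since the surfaces live in $B_1(0)$ and carry area $4\pi m$, a standard diameter/area argument forces the limit to have density at least $m$ at some point, giving $\W\geq 4\pi m$. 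Thus the infimum equals $4\pi m$.

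Next I would extract a convergent subsequence. By the compactness theorem stated in the abstract (connected surfaces in $\Omega\Subset\R^3$ with uniformly bounded Willmore energy and area are precompact under varifold convergence), the minimising sequence $M_k$, which has $\W(M_k)\to 4\pi m$ and fixed area $4\pi m$, admits a subsequence converging as varifolds to some integral varifold $V$ supported in $\ol{B_1(0)}$. The limit $V$ has area $4\pi m$ by continuity of the mass under varifold convergence, and by lower semicontinuity of Willmore's energy under varifold convergence one has $\W(V)\leq\liminf_k\W(M_k)=4\pi m$.

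The heart of the argument is then a rigidity statement: I would show that the only integral varifold $V$ in $\ol{B_1(0)}$ with area $4\pi m$ and $\W(V)\leq 4\pi m$ is the $m$-fold covered unit sphere. The key tool is again Li--Yau together with the sharp isoperimetric-type bound $\W(V)\geq 4\pi$ for any closed integral varifold with $\W<8\pi$ being a round sphere (Willmore's theorem / the Simon--Kuwert-Schätzle rigidity). Concretely, the mass $4\pi m$ confined to the unit ball forces, via Li--Yau, a point of density $\geq m$, so $\W(V)\geq 4\pi m$; combined with the upper bound this gives equality $\W(V)=4\pi m$ and saturates Li--Yau, which I expect forces $V$ to be an $m$-fold sphere of the correct radius. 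The area constraint $4\pi m$ then pins the radius to $1$ and the containment in $\ol{B_1(0)}$ fixes its position, so $V$ is the $m$-fold covered unit sphere. Since every subsequential limit is this same varifold, the whole sequence converges.

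The main obstacle I anticipate is the rigidity step: extracting from the equality case in Li--Yau that the limit varifold is genuinely a multiplicity-$m$ round sphere, rather than some singular configuration of spheres or a non-smooth varifold attaining the same energy. This requires care because varifold limits can a priori develop singularities, change topology, or split mass among several tangent spheres; one must verify that the geometric constraints (confinement to the unit ball, total area exactly $4\pi m$, energy exactly $4\pi m$) are jointly rigid enough to exclude all such degenerate limits and isolate the doubly-/multiply-covered sphere. I would handle this by carefully analysing the density function of $V$ and exploiting the fact that any point of maximal density $m$ must lie on a round sphere of radius $1$ tangent internally to the unit ball, then arguing that the full mass must concentrate there.
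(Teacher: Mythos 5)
Your skeleton (upper bound for the infimum via Theorem~\ref{theorem approximation}, extraction of a varifold limit by compactness, lower semicontinuity, a rigidity step identifying the limit, and the subsequence-of-subsequence argument) matches the paper's proof. But the rigidity step, which you correctly identify as the heart of the matter, contains a genuine gap --- in fact two. First, your claim that mass $4\pi m$ confined to $\overline{B_1(0)}$ ``forces, via Li--Yau, a point of density at least $m$'' by a ``standard diameter/area argument'' is false as stated: the minimising surfaces $M_k$ themselves are \emph{embedded} (so have density $1$ everywhere) yet carry area $4\pi m$ inside the ball, so confinement plus area alone can never produce a high-density point. For the limit varifold the existence of a density-$m$ point is true only \emph{a posteriori}, i.e.\ once one already knows the limit is the $m$-fold sphere; trying to derive the energy lower bound $\W(V)\geq 4\pi m$ from such a point is therefore circular. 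Second, even if you grant yourself $\W(V)=4\pi m$ together with a point of density $m$, saturation of Li--Yau does \emph{not} force a multiply covered round sphere: equality in Li--Yau characterises inversions of complete minimal surfaces with $m$ planar ends through the high-density point --- for $m=2$ the inverted catenoid has $\W=8\pi$ and a density-$2$ point. Ruling these competitors out requires using the confinement and the exact area in an essential way that Li--Yau does not provide.

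The paper closes exactly this gap with Lemma~\ref{lemma mueller roeger} (the M\"uller--R\"oger estimate), which you did not use: for any integral $2$-varifold with $H\in L^2$ supported in $\overline{B_1(0)}$ one has $\W(V)\geq \mu_V(B)$, with equality \emph{if and only if} $\mu_V=k\cdot\H^2|_{S^2}$, $k\in\N$. This single statement does all the work: applied to each competitor $M$ it gives the lower bound $\W(M)\geq \H^2(M)=4\pi m$, hence $\inf = 4\pi m$; applied to the limit varifold $V$ (which has $\mu_V(\overline{B_1(0)})=4\pi m$ by convergence of mass measures and $\W(V)\leq 4\pi m$ by lower semicontinuity) it forces equality, and the equality case together with the mass pins down $\mu_V = m\cdot\H^2|_{S^2}$. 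The underlying mechanism is not Li--Yau but a first-variation computation with the position vector field, testing $\delta V$ against $X(x)=x$ and applying Cauchy--Schwarz, which is precisely where the confinement enters quantitatively. To repair your proof you should replace the Li--Yau rigidity step by this lemma (or reprove it); everything else in your proposal then goes through as in the paper.
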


Convergence holds in the sense of varifolds and in particular as Radon measures on $\R^3$. This result differs from the unconstrained case \cite{bauer:2003er} or minimisation among $C^2$-boundaries with prescribed isoperimetric ratio \cite{MR3176354}. In both cases, there exists a smooth embedded (i.e.\ multiplicity $1$) surface of genus $g$ which minimises Willmore's energy among all surfaces of genus $g$ (which bound a domain with certain isoperimetric ratio, in the second case). 

\begin{corollary}\label{theorem negative}
Denote by $\E$ Helfrich's energy with constant parameters $\chi_K< 0<\chi_H$ and $H_0=0$. Let $m\in\N, m\geq 2$ and specify $\Omega = B_1(0)$, $S=4m\pi$. Then every sequence $M_k\in \M_{S,\Omega}$ such that
\[
\E(M_k)\to \inf\left\{\E(M)\:|\:M\in \M_{S,\Omega}\right\} = 4\pi\,(4\chi_Hm - \chi_K)
\]
converges to a higher multiplicity unit sphere $\mu = m\cdot \H^2|_{S^3}$ as varifolds and we have $\E(\mu) < \liminf_{k\to\infty}\E(M_k)$. If $M\in \M_{S', B_1(0)}$ for some $S'>0$ and 
\[
\E(M) \leq  4\chi_HS'
\]
holds, then $M$ is a topological sphere. Furthermore, if $\chi_K<-4\chi_H$, then for any open $\Omega\cc\R^3$, $S>0$, $C>0$ the functional $\E$ is bounded from below in the class of smooth manifolds $\M_{S,\Omega}$ and in the varifold closure of
\[
\B_{S,\Omega,C}:= \{M\in \M_{S,\Omega}\:|\:\E(M) < C\},
\]
 but not in the union of the closures
\[
\bigcup_{k=1}^\infty \overline{\B_{S,\Omega, k}}.
\]
\end{corollary}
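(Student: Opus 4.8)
The plan is to reduce $\E$ to Willmore's energy plus a topological term and then to combine Corollary~\ref{theorem willmore} with one confinement inequality. Writing $k_1,k_2$ for the principal curvatures, $A$ for the second fundamental form and $\vec H$ for the mean curvature vector, Gauss--Bonnet gives $\int_M K\d\H^2=4\pi(1-g)$, so $\E(M)=4\chi_H\W(M)+4\pi\chi_K(1-g)$, and the pointwise identity $H^2=(k_1-k_2)^2+4K$ yields the equivalent form $\E(M)=\chi_H\int_M(k_1-k_2)^2\d\H^2+4\pi(1-g)(4\chi_H+\chi_K)$. The one geometric input is that every closed $M\subset\ol{B_1(0)}$ satisfies $\W(M)\ge\H^2(M)$, strictly if $M\subset B_1(0)$: testing the first variation of area with the position field $X$ (so $\div_M X\equiv2$) gives $2\,\H^2(M)=-\int_M\langle X,\vec H\rangle\d\H^2\le\int_M|X|\,|\vec H|\d\H^2\le\int_M|\vec H|\d\H^2\le\H^2(M)^{1/2}\big(\int_MH^2\d\H^2\big)^{1/2}=2\,\H^2(M)^{1/2}\W(M)^{1/2}$, and squaring gives the claim; equality forces $|X|\equiv1$, i.e.\ $M=\partial B_1(0)$.

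For Part~I, $S=4\pi m$ and $M\subset B_1(0)$ force $\W(M)\ge4\pi m$, hence $\E(M)\ge16\pi\chi_Hm+4\pi\chi_K(1-g)\ge16\pi\chi_Hm+4\pi\chi_K$ (using $\chi_K<0$, $g\ge0$), and Theorem~\ref{theorem approximation} with $g=0$ shows this value is the infimum. For a minimising sequence I would write $\E(M_k)-(16\pi\chi_Hm+4\pi\chi_K)=4\chi_H(\W(M_k)-4\pi m)-4\pi\chi_K g_k$, a sum of two non-negative terms tending to $0$; thus $\W(M_k)\to4\pi m$ and $g_k=0$ for large $k$, so Corollary~\ref{theorem willmore} gives $M_k\to\mu:=m\cdot\H^2|_{S^2}$ as varifolds. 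Computing on the limit directly, $\W(\mu)=4\pi m$ and $\int_\mu K\d\H^2=4\pi m$ (Gauss--Bonnet with multiplicity), whence $\E(\mu)=4\pi m(4\chi_H+\chi_K)$ and $\E(\mu)-\liminf_k\E(M_k)=4\pi\chi_K(m-1)<0$. For Part~II the strict inequality $\W(M)>\H^2(M)=S'$ turns $\E(M)\le4\chi_H S'$ into $4\pi\chi_K(1-g)\le4\chi_H(S'-\W(M))<0$, and $\chi_K<0$ forces $1-g>0$, i.e.\ $g=0$.

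For Part~III assume $\chi_K<-4\chi_H$. Boundedness on $\M_{S,\Omega}$ is immediate and independent of $\Omega,S$: $\W\ge4\pi$ and $g\ge0$ give $\E\ge4\pi(4\chi_H+\chi_K)$. On $\B_{S,\Omega,C}$ the second form of $\E$ with $4\chi_H+\chi_K<0$ and non-negative first term gives $4\pi(1-g)(4\chi_H+\chi_K)<C$, and since the coefficient is negative this bounds the genus $g\le g_C$, then the first term, and hence $\W\le W_C$; consequently $\int_M|A|^2\d\H^2=4\W(M)-8\pi(1-g)$ is uniformly bounded. By the compactness theorem any varifold limit $V$ of a sequence in $\B_{S,\Omega,C}$ is an integral varifold with generalised second fundamental form (a curvature varifold in the sense of Hutchinson) and $\int_V|A|^2\le\liminf_k\int_{M_k}|A|^2<\infty$, so $\int_V K\le\tfrac12\int_V|A|^2$ is bounded above and $\E(V)=4\chi_H\W(V)+\chi_K\int_V K\ge\tfrac12\chi_K\int_V|A|^2$ is bounded below. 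For the union I would take the $m$-fold spheres $\mu_m=m\cdot\H^2|_{\partial B_{r_m}(x_0)}$, $r_m=(S/4\pi m)^{1/2}$, centred in a ball $B_{r_m}(x_0)\Subset\Omega$ (possible for large $m$); rescaling Theorem~\ref{theorem approximation} produces genus-$0$ surfaces of area $S$ in $\Omega$ with $\W\to4\pi m$ converging to $\mu_m$, so $\mu_m\in\ol{\B_{S,\Omega,k_m}}$ for some $k_m\to\infty$, whereas $\E(\mu_m)=4\pi m(4\chi_H+\chi_K)\to-\infty$.

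The genuinely delicate step is the passage to the varifold closure in Part~III: everything else is Gauss--Bonnet bookkeeping together with the confinement inequality and Corollary~\ref{theorem willmore}, but to bound $\E$ from below on $\ol{\B_{S,\Omega,C}}$ I must attach a meaning to the Gauss-curvature term $\int_V K$ for the limit varifold and control it. This is exactly where $\chi_K<-4\chi_H$ (rather than merely $\chi_K<0$) is used: it converts the energy bound $\E<C$ into a genus bound, and hence---through $\int_M|A|^2=4\W-8\pi(1-g)$ and lower semicontinuity of $\int|A|^2$ for curvature varifolds---into the upper bound on $\int_V K$. The dichotomy is then transparent: a fixed sublevel $\ol{\B_{S,\Omega,C}}$ caps the attainable multiplicity, while the union $\bigcup_k\ol{\B_{S,\Omega,k}}$ does not, so letting the multiplicity of the limiting sphere run off to infinity makes the non-lower-semicontinuous Gauss term send $\E$ to $-\infty$.
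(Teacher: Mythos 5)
Your proposal is correct and follows essentially the same route as the paper: the Gauss--Bonnet decomposition $\E = 4\chi_H\W + 4\pi\chi_K(1-g)$, the confinement inequality $\W(M)\geq\H^2(M)$ in the unit ball (which is exactly Lemma~\ref{lemma mueller roeger}, though you reprove it directly via the first variation with the position field rather than citing it), Theorem~\ref{theorem approximation} for the infimum and for the multiplicity-$m$ spheres witnessing unboundedness on $\bigcup_k\overline{\B_{S,\Omega,k}}$, and Hutchinson's curvature-varifold compactness with the bound on $\int|A|^2$ for the lower bound on $\overline{\B_{S,\Omega,C}}$. Your write-up is in fact somewhat more explicit than the paper's in the minimising-sequence step (splitting the excess energy into two non-negative terms and invoking Corollary~\ref{theorem willmore}) and in the rescaling needed to fit area-$S$ surfaces into a general $\Omega$, but these are refinements of the same argument, not a different one.
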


The theorem has some implications for the use of Helfrich's energy in the modelling of lipid bilayers. The multiple covering of a single sphere is unphysical since a biological membrane separating two domains is usually the location of chemical exchange. The higher multiplicity does not increase effective surface area; on the contrary, it would make the transport of any exchanged species more difficult.
 Obviously, the situation of the corollary is highly idealised, but probably similar phenomena could be observed under more generic conditions. 

%It seems that side conditions like an included volume constraint or variations in the parameters of Helfrich's energy coupled to chemical concentrations on the membrane are driving forces of real biological phenomena. 
We also suggest that it might be more appropriate to consider the lower semi-continuous envelope with respect to varifold convergence of Helfrich's energy in the class of $C^2$-boundaries than its direct extension to curvature varifolds (discussed below).

The case $\chi_K>0$ is entirely unphysical. Here we can consider non-constant material parameters. Assume that there are measurable functions $\chi_H, \chi_K$ and $H_0$ associated to each surface $M\in \M_{S,\Omega}$.

\begin{corollary}\label{theorem positive}
Let $\Omega\subset\R^3$ open and $r>0$ such that $\overline{B_r(x)}\subset\Omega$ for some $x\in \R^3, r>0$. Let $\E$ be Helfrich's energy with parameters $\chi_H, \chi_K$ and $H_0$ satisfying the bounds
\[
||\,\chi_H\,||_{L^\infty(M)}\leq C, \qquad ||\,H_0\,||_{L^2(M)}\leq C, \qquad \delta \leq \chi_K \leq C
\]
for some $C, \delta>0$ independent of $M\in \M_{S,\Omega}$. Assume that $\mu = 4\pi\,r^2\cdot \delta_x$ is a point measure or $\mu = \H^2|_{\partial B_r(x)}$. Then there exists a sequence $M_k\in \M_{4\pi\,r^2,\Omega}$ such that $\H^2|_{M_k}\stackrel*\wto \mu$ as Radon measures and $\E(M_k)\to -\infty$. In the second case, even varifold convergence holds.
\end{corollary}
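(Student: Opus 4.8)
The plan is to reduce everything to a single a priori upper bound for $\E$ and then to drive it to $-\infty$ with high-genus competitors supplied by Theorem \ref{theorem approximation}. I would start from the pointwise inequality $K\leq\frac14H^2$, valid on any $C^2$-surface since $\frac14H^2-K=\frac14(\kappa_1-\kappa_2)^2\geq0$; integrating gives $\int_M K^+\d\H^2\leq\frac14\int_M H^2\d\H^2=\W(M)$. Splitting $\int_M\chi_K K=\int_M\chi_K K^+-\int_M\chi_K K^-$, using $\delta\leq\chi_K\leq C$ and the Gauss--Bonnet identity $\int_M K\d\H^2=4\pi(1-g)$ (so that $\int_M K^-=\int_M K^++4\pi(g-1)$) yields
\[
\int_M\chi_K K\d\H^2\leq(C-\delta)\int_M K^+\d\H^2-4\pi\delta(g-1)\leq(C-\delta)\,\W(M)-4\pi\delta(g-1).
\]
Expanding $(H-H_0)^2$ and estimating with $\|\chi_H\|_{L^\infty}\leq C$, $\|H_0\|_{L^2}\leq C$ and Cauchy--Schwarz bounds $\int_M\chi_H(H-H_0)^2\d\H^2$ by $C_1(\W(M)+1)$. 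Altogether, for every admissible parameter field and every $M\in\M_{S,\Omega}$ of genus $g$,
\[
\E(M)\leq C_2\bigl(\W(M)+1\bigr)-4\pi\delta\,(g-1),
\]
with $C_2=C_2(C,\delta)$. It therefore suffices to produce surfaces of area $4\pi r^2$ with uniformly bounded Willmore energy and genus tending to infinity whose area measures converge to $\mu$.

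For the point mass $\mu=4\pi r^2\,\delta_x$ I would take the genus-$g$ surface of Theorem \ref{theorem approximation} in $B_1(0)$ of area $4\pi m$ with $\W<4\pi m+1$, rescale it by $r/\sqrt m$ and translate it to $x$. Since Willmore energy is scale invariant, this produces $M_m\in\M_{4\pi r^2,\Omega}$ supported in $\overline{B_{r/\sqrt m}(x)}$ with $\W(M_m)<4\pi m+1$ and arbitrary genus. Letting $m\to\infty$ collapses the support to $\{x\}$ while the mass stays $4\pi r^2$, so $\H^2|_{M_m}\stackrel*\wto4\pi r^2\delta_x$; choosing the genus $g_m$ so large that $4\pi\delta(g_m-1)>C_2(4\pi m+2)+m$ gives $\E(M_m)<-m\to-\infty$. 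Only weak-$*$ convergence of the area measures is claimed here, and it is immediate from the shrinking support and fixed mass.

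For the sphere $\mu=\H^2|_{\partial B_r(x)}$ the scale must stay fixed, so I would instead insert the genus near a single point. Take the $m=2$ surface of Theorem \ref{theorem approximation} (genus $g$, area $8\pi$, $\W<8\pi+1$), remove a small disk to create one boundary circle, rescale the result into a ball $B_{\lambda}(p)$ around a point $p$ with $\lambda\to0$, and glue its boundary smoothly into a geodesic disk of the same size excised from $\partial B_{r_k}(x)$. Willmore energy is scale invariant, so the inserted patch carries Willmore bounded by $8\pi+1$ \emph{independently of its genus}, while its area is $O(\lambda^2)\to0$; choosing $r_k\nearrow r$ to absorb this area keeps the total area equal to $4\pi r^2$ and keeps $\W(M_k)$ bounded (by roughly $12\pi$). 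Since the modification is confined to $B_\lambda(p)$ with $\lambda\to0$ and the complement is the round sphere $\partial B_{r_k}(x)\to\partial B_r(x)$, the surfaces converge to $\H^2|_{\partial B_r(x)}$ as varifolds, the vanishing-mass patch being varifold-negligible. With the patch genus tending to infinity and Willmore energy bounded, the a priori estimate again forces $\E(M_k)\to-\infty$.

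The delicate points are all in the second construction. One has to carry out the excision and gluing so that the result is a smoothly embedded, orientable surface of exactly the prescribed genus and area, with the transition annulus contributing only a bounded (indeed arbitrarily small) amount of Willmore energy; here the nearly flat geometry of $\partial B_{r_k}(x)$ at scale $\lambda$ is what makes a standard cut-off gluing possible. It is worth noting that lower semicontinuity of $\W$ forces $\liminf_k\W(M_k)\geq8\pi>4\pi=\W(\partial B_r(x))$, so the surplus Willmore energy necessarily concentrates and escapes at the collapse point $p$; checking that this is compatible with varifold convergence to the simple sphere is the conceptual crux, whereas the uniform Willmore bound itself is inherited directly from Theorem \ref{theorem approximation}. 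The remaining verifications --- that $\mu$ is attained exactly and that the hypotheses enter only through $C_2$ and $\delta$ --- are then routine.
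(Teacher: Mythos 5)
Your proposal is correct in substance and follows the same overall strategy as the paper: an a priori bound of the form $\E(M)\leq C_2(\W(M)+1)-4\pi\delta\,(g-1)$, obtained exactly as in the paper's proof by splitting $\int_M\chi_K K\,\mathrm{d}\H^2$ according to the sign of $K$, using $K\leq H^2/4$ together with Gauss--Bonnet, and then feeding in high-genus competitors from Theorem \ref{theorem approximation} whose Willmore energy stays controlled while the genus diverges (you merely state the paper's inline estimate as a clean standalone inequality, which is a nice presentation). For the point mass $\mu=4\pi r^2\delta_x$ your construction --- rescale the area-$4\pi m$, genus-$g_m$ surface by $r/\sqrt{m}$, translate to $x$, and take $g_m\gg m$ --- is precisely the paper's. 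Where you genuinely deviate is the sphere case. The paper does not cut and paste: it reruns the construction of Theorem \ref{theorem approximation} with a tiny inner sphere of radius $1/k$ placed inside a sphere of radius $\approx r$ and joined to it by $g_k$ rescaled catenoids of Lemma \ref{lemma flattening catenoid}; this immediately yields embedded surfaces with genus tending to infinity, $\W\approx 8\pi$, exact area $4\pi r^2$, and varifold convergence to $\H^2|_{\partial B_r(x)}$, with no analysis beyond the two flattening lemmas already proved. Your connected-sum grafting of a collapsed high-genus bubble reaches the same conclusion, but the excision-and-gluing step with controlled Willmore energy is asserted rather than carried out; to make it rigorous with the paper's own tools you would, for instance, first flatten $\partial B_{r_k}(x)$ near the grafting point (a rescaled Lemma \ref{lemma flattening sphere}) and use the planar regions of the Theorem \ref{theorem approximation} surface --- at which point attaching the bubble by one additional catenoid is simpler than cutting and re-gluing, and essentially recovers the paper's construction. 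So: same skeleton, identical first case, and a workable but more labour-intensive variant in the second case.

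Two minor points. First, your side remark that lower semicontinuity ``forces'' $\liminf_k\W(M_k)\geq 8\pi$ is not correct: semicontinuity only gives $\liminf_k\W(M_k)\geq\W\big(\H^2|_{\partial B_r(x)}\big)=4\pi$; the excess energy in your sequence is a feature of the construction (and of the paper's), not a consequence of semicontinuity. This plays no role in the argument. Second, the exact-area normalisations (scale invariance of $\W$ in the first case, adjusting $r_k$ in the second) are handled correctly.
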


Corollaries \ref{theorem willmore}, \ref{theorem negative} and \ref{theorem positive} easily follow from Theorem \ref{theorem approximation} and the reverse estimate given in Lemma \ref{lemma mueller roeger}. We remark that unlike genus, connectedness is stable in the minimisation problem.

\begin{theorem}\label{theorem compactness with connected support}
Let $K, M>0$ and $\Omega\cc\R^n$ open. The class of integral $2$-varifolds $V$ in $\R^n$ satisfying
\begin{enumerate}
\item 
\[
\spt(\mu_V)\subset \overline{\Omega},\qquad \mu_V(\overline{\Omega}) \leq M, \qquad \W(V) \leq K\quad\text{and}
\]
\item $\spt(\mu_V)$ is connected
\end{enumerate}
is (sequentially) compact under the convergence of varifolds. The same holds for the closure with respect to varifold convergence of connected manifolds which are $C^2$-embedded into $\Omega$ with surface area bounded by $M$ and Willmore energy bounded by $C$. 
\end{theorem}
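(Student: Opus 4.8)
The plan is to combine Allard's integral varifold compactness theorem with the lower semicontinuity of Willmore's energy, and then to spend the real effort on showing that connectedness of the support survives in the limit. For $V$ in the class, Cauchy--Schwarz gives $\|\delta V\|\le\int_{\overline\Omega}|H|\,d\mu_V\le(4\W(V))^{1/2}\mu_V(\overline\Omega)^{1/2}\le 2(KM)^{1/2}$, so every such $V$ has uniformly bounded first variation, uniformly bounded mass, and support in the fixed compact set $\overline\Omega$. Allard's compactness theorem for integral varifolds then yields, for any sequence $V_k$ in the class, a subsequence converging in the varifold sense to an integral $2$-varifold $V$. That the three bounds in (1) pass to the limit is standard: $\spt(\mu_V)\subset\overline\Omega$ and $\mu_V(\overline\Omega)\le M$ because mass is carried by the closed set $\overline\Omega$ and is lower semicontinuous, while $\W(V)\le\liminf_k\W(V_k)\le K$ follows from the convergence $\delta V_k\to\delta V$ together with the lower semicontinuity of the $L^2$-norm of the generalized mean curvature. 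Up to here nothing uses connectedness.

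The heart of the matter is property (2), which I would establish by contradiction. Suppose $\spt(\mu_V)$ decomposes as $A\sqcup B$ with $A,B$ nonempty and compact and $d:=\dist(A,B)>0$. Writing $f:=\dist(\,\cdot\,,A)$, every point of $\spt(\mu_V)$ has $f\in\{0\}\cup[d,\infty)$, so the closed slab $\overline W:=\{d/4\le f\le 3d/4\}$ satisfies $\mu_V(\overline W)=0$. On the other hand, each point of $\spt(\mu_V)$ is approximated by points of $\spt(\mu_{V_k})$ (by lower semicontinuity of mass on small balls), so for large $k$ the connected set $\spt(\mu_{V_k})$ contains points with $f$ arbitrarily close to $0$ and points with $f\ge d-o(1)$; since $f$ is continuous and $\spt(\mu_{V_k})$ is connected, there is a point $y_k\in\spt(\mu_{V_k})$ with $f(y_k)=d/2$, and then $B_{d/4}(y_k)\subset\{d/4<f<3d/4\}\subset\overline W$.

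It remains to show that this forced crossing carries a definite amount of mass. Here I would invoke Simon's monotonicity formula, which for an integral $2$-varifold with $H\in L^2$ gives a lower area-ratio bound $\mu_{V_k}(B_\rho(y_k))\ge c_0\rho^2$ at scales $\rho\le d/4$ for which the localized energy $\int_{B_\rho(y_k)}|H|^2\,d\mu_{V_k}$ stays below a universal threshold. Since $\overline{B_\rho(y_k)}\subset\overline W$, this would give $\mu_{V_k}(\overline W)\ge c_0\rho^2$, whereas testing the compact set $\overline W$ against the weak-$*$ convergence yields $\limsup_k\mu_{V_k}(\overline W)\le\mu_V(\overline W)=0$ — the desired contradiction. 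I expect the main obstacle to be precisely the uniformity of this density bound along the sequence: the monotonicity inequality degenerates where area and curvature concentrate simultaneously, so one must exclude a vanishingly thin crossing of the fixed-width gap. The crucial point is the dichotomy supplied by the formula — a crossing with vanishing area ratio forces the localized Willmore energy to diverge, contradicting $\W(V_k)\le K$ — reinforced by the fact that concentrated energy can occur at only finitely many points (at most of order $K$), which constrains where the degeneracy can happen.

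Finally, the second assertion is immediate. Every connected surface $C^2$-embedded in $\Omega$ with area $\le M$ and $\W\le C$ is an integral $2$-varifold with $H\in L^2$ satisfying (1) (with $K=C$) and (2), hence lies in the class treated above. Since that class is sequentially closed under varifold convergence — which is exactly what the compactness argument establishes — the varifold closure of the manifold family is contained in it, and a sequentially closed subset of a sequentially compact class is itself compact.
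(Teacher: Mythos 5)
Your route to connectedness (separation $A\sqcup B$, a distance function $f=\dist(\cdot,A)$, and a forced crossing of the slab $\overline W=\{d/4\le f\le 3d/4\}$) is genuinely different from the paper's proof, which instead extracts a Hausdorff limit $K$ of the supports, shows every point of $K\setminus\spt(\mu_V)$ is an atom of mass $\geq 4\pi$ of the limiting energy measure $\alpha=\lim_k|H_k|^2\mu_k$ (hence there are finitely many such points), and concludes via the fact that a connected set has no isolated points. Your structure could work, but as written it has a genuine gap at exactly the step you flag. From Lemma \ref{lemma li-yau} at a crossing point $y_k$ with density $\geq 1$ you get the dichotomy: either $\mu_k(B_\rho(y_k))\geq\tfrac12\pi\rho^2$, or $\int_{B_\rho(y_k)}|H_k|^2\d\mu_k\geq 2\pi$. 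The first alternative contradicts $\limsup_k\mu_k(\overline W)\le\mu_V(\overline W)=0$, but the second does \emph{not} contradict $\W(V_k)\le K$: a vanishing area ratio forces the localized energy only to exceed a fixed universal threshold (of order $4\pi$), not to ``diverge'' as you claim, and a single energy concentration of size $4\pi$ inside the slab is perfectly compatible with the global bound $\int|H_k|^2\d\mu_k\le 4K$. So the case of a ``vanishingly thin crossing'' is not excluded by your argument; one crossing ball is simply not enough.

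The repair is the pigeonhole you gesture at in your last sentence, but it must be carried out with \emph{many disjoint balls spanning the gap}, not one ball at level $d/2$. Since $f$ is continuous and $\spt(\mu_k)$ is connected, $f(\spt(\mu_k))$ contains the whole interval $(d/4,3d/4)$ for large $k$, so you can choose levels $t_1,\dots,t_N$ in $(d/4,3d/4)$ spaced more than $2\rho$ apart and points $y_k^i\in\spt(\mu_k)$ with $f(y_k^i)=t_i$; the balls $B_\rho(y_k^i)$ are then pairwise disjoint and contained in a fixed compact subset of $\{0<f<d\}$, a set of $\mu_V$-measure zero. Each ball satisfies the dichotomy above, and since the balls are disjoint, at most $4K/(2\pi)=2K/\pi$ of them can realise the energy alternative. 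Choosing $\rho$ so small that $N\approx d/(4\rho)>2K/\pi$, at least one ball must carry mass $\geq \tfrac12\pi\rho^2$ for every large $k$, and this \emph{does} contradict $\limsup_k\mu_k(\overline W)\le \mu_V(\overline W)=0$. (Two minor points: the density of an integral varifold is $\geq 1$ only $\mu_k$-a.e., so either pick the $y_k^i$ among density points, which lie dense in the support as the paper does, or invoke upper semicontinuity of the density; and the rest of your proposal --- Allard compactness, lower semicontinuity of $\W$, and the deduction of the second assertion from closedness of the varifold class --- is correct and matches the paper.)
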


This has a direct implication for minimising Willmore's energy in a suitable topological class.

\begin{corollary}\label{corollary minimisation}
Let $\Omega\cc\R^n$ be open and $S>0$. Then there exists a $2$-varifold $V$ with mass measure $\mu_V$ such that
\begin{enumerate}
\item $\spt(\mu_V)\subset\overline\Omega$ is connected, $\mu_V(\overline\Omega) = S$ and
\item $V$ minimises $\W$ among the varifolds satisfying (1).
\end{enumerate}
The same holds if we add the assumption that $V$ is a varifold limit of connected embedded $C^2$-surfaces with uniformly bounded Willmore energy and surface area $S$ in (1).
\end{corollary}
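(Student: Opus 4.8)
The plan is to derive this corollary directly from the compactness result in Theorem~\ref{theorem compactness with connected support} via the direct method of the calculus of variations. Fix $S>0$ and consider the class $\mathcal{V}$ of integral $2$-varifolds $V$ satisfying $\spt(\mu_V)\subset\ol\Omega$, $\mu_V(\ol\Omega)=S$, and $\spt(\mu_V)$ connected. First I would check that $\mathcal{V}$ is nonempty: since $\Omega$ is open and nonempty we may place a small round sphere inside $\Omega$ and rescale it to have area exactly $S$, which furnishes a connected embedded $C^2$-surface, hence an element of $\mathcal{V}$ whose Willmore energy is finite. Let $I:=\inf\{\W(V)\mid V\in\mathcal{V}\}$; this infimum is finite and nonnegative because $\W\geq 0$ and $\mathcal{V}\neq\emptyset$. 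Take a minimising sequence $V_k\in\mathcal{V}$ with $\W(V_k)\to I$.

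Next I would verify that the sequence $(V_k)$ lies in a set to which Theorem~\ref{theorem compactness with connected support} applies. For large $k$ we have $\W(V_k)\leq I+1=:K$, and the mass bound $\mu_{V_k}(\ol\Omega)=S\leq M$ holds with $M:=S$; the support condition and connectedness are built into the definition of $\mathcal{V}$. Hence by Theorem~\ref{theorem compactness with connected support} there is a subsequence (not relabelled) and an integral $2$-varifold $V$ with $V_k\to V$ in the varifold sense, such that $\spt(\mu_V)\subset\ol\Omega$ is connected. This produces the candidate minimiser.

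The remaining point is to show $V\in\mathcal{V}$ and that $V$ actually attains the infimum, which rests on two semicontinuity/continuity properties of varifold convergence. For the energy I would invoke the lower semicontinuity of the Willmore functional under varifold convergence, giving $\W(V)\leq\liminf_{k\to\infty}\W(V_k)=I$; combined with $V\in\mathcal{V}$ this forces $\W(V)=I$, so $V$ is a minimiser. For the constraint I must confirm the mass is preserved, i.e.\ $\mu_V(\ol\Omega)=S$. Here lies the main subtlety: varifold convergence yields weak-$*$ convergence of the mass measures, which only gives $\mu_V(\ol\Omega)\leq\liminf\mu_{V_k}(\ol\Omega)=S$ in general, and mass could in principle escape. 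The compactness theorem confines everything to the fixed compact set $\ol\Omega$, so no mass escapes to infinity; what one must rule out is loss of total mass in the limit. Because $\ol\Omega$ is compact and the supports all lie in it, testing the weak-$*$ convergence against the constant function $\mathbf{1}_{\ol\Omega}$ (approximated from above by continuous compactly supported functions and using that all $\mu_{V_k}$ are concentrated in $\ol\Omega$) yields $\mu_V(\ol\Omega)=\lim_k\mu_{V_k}(\ol\Omega)=S$, so the mass constraint is exactly preserved. This establishes $V\in\mathcal{V}$ and completes the proof of the first assertion.

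For the second assertion, I would repeat the argument verbatim but take the minimising sequence within the varifold closure $\mathcal{C}$ of connected embedded $C^2$-surfaces of area $S$ and Willmore energy bounded by the running infimum plus one. Theorem~\ref{theorem compactness with connected support} explicitly asserts compactness for this closure as well, and $\mathcal{C}$ is by construction closed under varifold limits, so the limit $V$ again lies in $\mathcal{C}$; the same lower semicontinuity of $\W$ and preservation of mass then identify $V$ as a minimiser over $\mathcal{C}$. The only care needed is to ensure the energy bound used to define the subclass is chosen large enough to contain a tail of the minimising sequence, which is immediate once the infimum is shown finite by exhibiting a single admissible sphere as above.
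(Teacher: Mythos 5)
Your argument is, in outline, exactly the paper's: the paper disposes of this corollary in a single sentence, invoking Theorem \ref{theorem compactness with connected support}, the definition of varifold convergence, and lower semicontinuity of $\W$, and your write-up correctly expands this into the direct method. In particular, your treatment of the mass constraint (testing against a continuous cut-off equal to $1$ on the compact set $\overline\Omega$, so that $\mu_{V_k}(\overline\Omega)\to\mu_V(\overline\Omega)$ and no mass is lost) is the right way to see that $\mu_V(\overline\Omega)=S$ survives the limit. There is, however, one step that fails as written: non-emptiness. A round sphere of area $S$ has radius $\sqrt{S/(4\pi)}$; rescaling a small sphere in $\Omega$ to have area exactly $S$ may therefore push it outside $\Omega$ (take $\Omega$ a tiny ball and $S$ large). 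For the first assertion the repair is easy: take $\partial B_r(x)\subset\Omega$ with multiplicity $k$ and $r=\sqrt{S/(4\pi k)}$ for $k$ large; this is an integral varifold with connected support, mass exactly $S$ and $\W=4\pi k<\infty$. For the second assertion a multiplicity trick is not available --- you need genuine embedded connected $C^2$-surfaces of area exactly $S$ inside $\Omega$ --- so you must add area within $\Omega$, e.g.\ by graphical perturbations of a small sphere as in the paper's proof of Theorem \ref{theorem approximation} (a finite amount of Willmore energy buys arbitrary extra area inside an arbitrarily small ball), hitting the value $S$ exactly by an intermediate value argument in the perturbation parameter.

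A second, subtler point concerns your cutoff ``running infimum plus one.'' If the admissible class in the second assertion is read as the union over all $C>0$ of the closures $\overline{\B_{S,\Omega,C}}$ (with area exactly $S$), then minimising over the single closure with bound $I+1$ does not obviously produce a minimiser over the whole class: a competitor varifold with energy close to $I$ might only be approximable by surfaces of enormous Willmore energy, hence lie outside your class $\mathcal{C}$, and the union of the nested closures need not be closed --- the paper itself trades on the difference between a single closure and the union of closures in the last statement of Corollary \ref{theorem negative}. Your argument is complete if the uniform energy bound is fixed in advance, which is what Theorem \ref{theorem compactness with connected support} actually provides and is the natural reading of the paper's (equally terse) claim; but you should say explicitly which class you are minimising over, since with the union reading neither your proof nor the paper's one-line proof closes this gap.
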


Corollary \ref{corollary minimisation} follows directly from the Theorem \ref{theorem compactness with connected support}, the definition of varifold convergence and the lower-semicontinuity of Willmore's energy.

\subsection{Varifolds}

Willmore's energy is defined for general varifolds $V$ (see e.g.\ \cite{Allard:1972vh} or \cite{simon1983lectures}) through the integral over the squared weak mean curvature
\[
\W(V) = \frac14\int|H|^2\d\mu_V
\]
if the first variation $\delta V$ is absolutely continuous with respect to the mass measure $\mu_V$ and the density $H$ of $\delta V$ with respect to $\mu_V$ (i.e.\ weak mean curvature) is in $L^2(\mu_V)$. We also extend Helfrich's energy to the class of Hutchinson's curvature varifolds (varifolds with square integrable second fundamental form $A$, see \cite{hutchinson19862nd}) as
\[
\E(V) = \chi_H\int |H|^2\d\mu_V + \chi_K \int K\d V
\]
where $2K = |H|^2 - |A|^2$. $|A|$ is the Frobenius norm of the second fundamental form. This definition gives the usual Gaussian curvature on $C^2$-manifolds. For the purposes of this article, we only need to work with varifolds that are constant integer density multiples of smooth surfaces. In this case, the Gaussian curvature of the varifold agrees with the usual Gaussian curvature of the underlying manifold since the density cancels out in the defining equation of $A$. Thus our results only depend on the fact that $M$ and an integer multiple of $M$ have the same Gauss curvature, or even more generally on the fact that
\[
\int K \d V = \theta\,\int_M K \d\H^2
\]
if $V$ is the integral varifold given by $M$ and the constant density $\theta\in \N$. This assumption is sensible, since the total curvature cannot distinguish between the immersed manifolds $N_x:= M\cup (M+x)$ for different $x\in\R^3$ which are the union of $M$ and a translate of itself. The multiplicity two case can be thought of as $x= 0$.

Recall the following result about varifolds supported in the unit ball.

\begin{lemma}\cite[Theorem 1]{Muller:2013vz}\label{lemma mueller roeger}
Let $V$ be an integral 2-varifold with square integrable mean curvature $H$ such that $\spt(\mu_V)\subset \overline{B_1(0)}$. Then we have
\[
\W(V) \geq \mu_V(B)
\]
and equality holds if and only if $\mu = k\cdot\H^2|_{S^2}$ for an integer $k\in\N$.
\end{lemma}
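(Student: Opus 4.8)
The plan is to test the first variation of $V$ against the position field and to use the confinement to the ball; this is a short and robust argument that avoids any monotonicity machinery. Since $\spt(\mu_V)$ is compact I may insert $X(y)=y$ (cut off to a compactly supported field that equals the identity on a neighbourhood of $\overline{B_1(0)}$) into the first variation formula. As $\delta V$ is absolutely continuous with respect to $\mu_V$ with density $H\in L^2(\mu_V)$ and $V$ has no boundary, and since the tangential divergence of the position field equals the dimension of the tangent planes, $\div_{T}X = 2$, this gives
\[
2\,\mu_V(\R^n) = \int \div_{T}X\d\mu_V = -\int H\cdot y\d\mu_V(y).
\]

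Next I would bound the right-hand side. The inclusion $\spt(\mu_V)\subset\overline{B_1(0)}$ means $|y|\le 1$ on the support, so
\[
2\,\mu_V(\R^n) = -\int H\cdot y\d\mu_V \le \int |H|\,|y|\d\mu_V \le \int |H|\d\mu_V,
\]
and Cauchy--Schwarz yields $\int |H|\d\mu_V \le \left(\int |H|^2\d\mu_V\right)^{1/2}\mu_V(\R^n)^{1/2} = 2\,\W(V)^{1/2}\,\mu_V(\R^n)^{1/2}$, using $\int |H|^2\d\mu_V = 4\,\W(V)$. Dividing by $2\,\mu_V(\R^n)^{1/2}$ (the case $\mu_V\equiv 0$ being trivial) gives $\mu_V(\R^n)\le\W(V)$, and since $\mu_V(\R^n)=\mu_V(\overline{B_1(0)})\ge\mu_V(B)$ the asserted inequality follows.

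For the equality statement I would trace the estimates backwards. Equality in Cauchy--Schwarz forces $|H|$ to be $\mu_V$-a.e.\ equal to a constant $c$; were $c=0$ the varifold would be stationary and the identity above would give $\mu_V\equiv 0$, so $c>0$. Equality in the two preceding inequalities then forces $H=-c\,y/|y|$ together with $|y|=1$ at $\mu_V$-a.e.\ point, whence $\spt(\mu_V)\subset S^2=\partial B_1(0)$ and $H$ is the inward radial field of magnitude $c$. An integral $2$-varifold supported on the smooth connected manifold $S^2$ with first variation given by a Radon measure must be a constant integer multiple of $\H^2|_{S^2}$; comparing $H=-c\,y$ with the mean curvature $-2y$ of the unit sphere gives $c=2$ and $\mu_V = k\,\H^2|_{S^2}$ for some $k\in\N$, and conversely these surfaces realise equality.

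The inequality itself is immediate once the position field is chosen, so the only delicate point is the equality discussion: specifically, promoting ``supported on $S^2$ with locally bounded first variation'' to ``constant integer density'' via the constancy theorem for integral varifolds, and disposing cleanly of the degenerate cases $\mu_V\equiv 0$ and $c=0$. I expect this rigidity step, rather than the estimate, to be the main obstacle.
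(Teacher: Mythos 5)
The paper never proves this lemma: it is quoted directly from M\"uller--R\"oger \cite{Muller:2013vz}, so the relevant comparison is with the proof in that cited source. Your argument --- inserting the (cut-off) position field into the first variation, using $\div_T y=2$ and $|y|\le 1$ on the support, then Cauchy--Schwarz to get $4\,\mu_V(\R^n)\le \int|H|^2\d\mu_V$ --- is precisely the argument of \cite{Muller:2013vz}, and the inequality part of your proposal is complete and correct. (One notational caveat, inherited from the lemma as stated: ``$\mu_V(B)$'' must be read as the total mass $\mu_V(\R^n)=\mu_V(\overline{B_1(0)})$, since for the extremal configurations $k\cdot\H^2|_{S^2}$ all mass sits on $\partial B_1(0)$; you implicitly and correctly treat it this way.)

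The equality discussion, however, contains a genuine flaw at exactly the step you flagged as delicate. The auxiliary statement you invoke --- ``an integral $2$-varifold supported on the connected manifold $S^2$ with first variation given by a Radon measure must be a constant integer multiple of $\H^2|_{S^2}$'' --- is false as stated: the restriction $\H^2|_S$ to a closed spherical cap $S\subset S^2$ is an integral varifold with locally bounded first variation (the boundary circle contributes a singular, line-tension type term to $\delta V$), yet it is not a constant multiple of the sphere. What rescues the argument is that the lemma's hypothesis is strictly stronger than ``$\delta V$ is a Radon measure'': square integrable mean curvature here means $\delta V(X)=-\int H\cdot X\d\mu_V$ for all test fields, i.e.\ $\delta V$ is absolutely continuous with respect to $\mu_V$, with no singular boundary part. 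Under that hypothesis the constancy-type conclusion does hold: writing $\mu_V=\theta\,\H^2|_S$ with $S\subset S^2$ and integer $\theta\ge 1$, testing $\delta V$ with fields tangent to $S^2$ and using that $H$ is $\mu_V$-a.e.\ perpendicular to the tangent plane (Brakke's theorem) --- or, alternatively, that an integer-valued $BV$ function whose distributional gradient is absolutely continuous with respect to $\H^2$ must be locally constant --- shows that the tangential distributional gradient of $\theta\,\mathbf{1}_S$ vanishes on the connected manifold $S^2$, whence $S=S^2$ and $\theta\equiv k$. So your proof closes once this step is run with the absolute-continuity hypothesis rather than the Radon-measure one; as written, the rigidity lemma you rely on is refuted by a cap.
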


Finally, recall the following localised Li-Yau inequality originally due to L.~Simon. A proof formulated for manifolds can be found in \cite[Lemma 1]{MR1650335}, but the same argument applies to integral varifolds.

\begin{lemma}\label{lemma li-yau}
Let $V$ be an integral varifold with $H\in L^2(\mu_V)$ and $r>0$. Then
\begin{equation}\label{eq li-yau}
\Theta^{2}(x) := \limsup_{s\to 0}\frac{\mu_V(B_s(x))}{\pi s^2} \leq \frac{\mu_V(B_r)}{\pi r^2} + \frac1{4\pi}\int_{B_r}|H|^2\d\mu_V
\end{equation}
\end{lemma}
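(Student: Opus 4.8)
The statement is the localised Simon--Li--Yau monotonicity inequality, and the plan is to derive it from the first variation identity $\int \div_V X\d\mu_V = -\int\langle H,X\rangle\d\mu_V$ by testing against radial vector fields. First I would fix $x$, write $\rho_y = |y-x|$, and insert $X(y) = \phi(\rho_y)\,(y-x)$ for cutoff functions $\phi$ approximating $\mathbf 1_{[0,s)}$. Decomposing $y-x$ into its components tangent and orthogonal to the approximate tangent plane of $V$ gives
\[
\div_V X = 2\phi(\rho_y) + \rho_y\,\phi'(\rho_y) - \phi'(\rho_y)\,\frac{|(y-x)^\perp|^2}{\rho_y},
\]
and feeding this into the first variation identity and letting $\phi\to\mathbf 1_{[0,s)}$ yields, after division by $s^2$, the differential monotonicity identity
\[
\frac{\d}{\d s}\left(\frac{\mu_V(B_s(x))}{s^2}\right) = \frac{\d}{\d s}\int_{B_s(x)}\frac{|(y-x)^\perp|^2}{\rho_y^4}\d\mu_V + \frac{1}{s^3}\int_{B_s(x)}\langle H, y-x\rangle\d\mu_V .
\]

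Next I would integrate from $\sigma$ to $r$, reorganising the mean-curvature contribution by Fubini so that in the limit $\sigma\to 0$ the inner-scale term $\mu_V(B_\sigma(x))/\sigma^2$ converges to $\pi\,\Theta^2(x)$. The crucial algebraic step is to combine the nonnegative projection term with the cross term into a perfect square,
\[
\frac{|(y-x)^\perp|^2}{\rho_y^4} + \frac12\,\frac{\langle H, y-x\rangle}{\rho_y^2} = \left|\frac{(y-x)^\perp}{\rho_y^2} + \frac{H}{4}\right|^2 - \frac{|H|^2}{16},
\]
using that the generalised mean curvature of an integral varifold is $\mu_V$-a.e.\ orthogonal to the tangent plane (Brakke), so that $\langle H, y-x\rangle = \langle H, (y-x)^\perp\rangle$. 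After this rearrangement the explicit $\tfrac1{16}\int|H|^2$ cancels against the constant part of the square, and one is left with the exact identity $\pi\,\Theta^2(x) = \mu_V(B_r)/r^2 + \int_{B_r}\Phi\d\mu_V$, where $\Phi = \tfrac12\langle H, y-x\rangle\,(r^{-2}-\rho_y^{-2}) - |(y-x)^\perp|^2/\rho_y^4$.

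The last ingredient is to bound $\Phi$. Rather than estimating the scale-boundary term in isolation, I would absorb it against the retained projection term: writing $\lambda = \rho_y^{-2} - r^{-2}\in[0,\rho_y^{-2}]$ and applying Young's inequality with the weight chosen so that the $|(y-x)^\perp|^2/\rho_y^4$ contribution cancels exactly, the residue is pointwise bounded by $\tfrac1{16}\lambda^2\rho_y^4\,|H|^2 \le \tfrac1{16}|H|^2$, since $\lambda\rho_y^2 = 1 - \rho_y^2/r^2\in[0,1]$. This collapses everything to
\[
\Theta^2(x) \le \frac{\mu_V(B_r)}{\pi r^2} + \frac{1}{16\pi}\int_{B_r}|H|^2\d\mu_V ,
\]
which is in fact sharper than \eqref{eq li-yau} and a fortiori yields the stated bound with constant $\tfrac1{4\pi}$.

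I expect the main obstacle to be precisely this bookkeeping: one must verify that the projection term, the cross term and the scale-boundary term conspire to leave only a clean multiple of $\int|H|^2$ with no leftover of indefinite sign, which is exactly what the completion of the square followed by the weighted Young step achieves. The remaining analytic points are routine: the smooth approximation of $\mathbf 1_{[0,s)}$, and the vanishing as $\sigma\to 0$ of the boundary contribution $\sigma^{-2}\int_{B_\sigma(x)}\langle H, y-x\rangle\d\mu_V$, which follows from $H\in L^2(\mu_V)$ together with the density bound $\mu_V(B_\sigma(x))\le C\sigma^2$ that the monotonicity itself supplies, via Cauchy--Schwarz.
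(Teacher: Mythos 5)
Your proposal is correct, but note that there is no proof inside the paper to compare against: for this lemma the paper only cites [Lemma 1] of the given reference (a proof ``formulated for manifolds'') and asserts that the same argument applies to integral varifolds. What you have written out is precisely that standard argument --- Simon's monotonicity identity from the first variation with radial test fields, Fubini in the scale variable, and the Kuwert--Sch\"atzle completion of the square --- so in method you agree with the proof the paper outsources. Two substantive remarks. First, your constants check out: the weighted Young step yields $\Theta^2(x)\le \mu_V(B_r)/(\pi r^2)+\tfrac{1}{16\pi}\int_{B_r}|H|^2\,\mathrm{d}\mu_V$, which is consistent with (and, letting $B_r$ exhaust a closed surface, reproduces) the sharp Li--Yau inequality, and a fortiori gives the stated bound with constant $\tfrac{1}{4\pi}$. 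Second, the one ingredient that genuinely separates the varifold case from the smooth case is the one you invoke: Brakke's theorem that the generalised mean curvature of an integral varifold is $\mu_V$-a.e.\ orthogonal to the tangent plane. Without it, the tangential part of $\langle H,y-x\rangle$ cannot be absorbed into $|(y-x)^\perp|^2/\rho_y^4$ with any constant, so this citation is doing real work --- it is exactly what is hidden in the paper's remark that the manifold argument ``applies to integral varifolds.'' Finally, one point of rigour in the step you call routine: the two terms constituting $\Phi$ need not be separately $\mu_V$-integrable near $x$, so the ``exact identity'' $\pi\Theta^2(x)=\mu_V(B_r)/r^2+\int_{B_r}\Phi\,\mathrm{d}\mu_V$ must be derived keeping them combined; your pointwise bound $\Phi\le|H|^2/16$ makes $\Phi^+$ integrable, and monotone convergence applied to $|H|^2/16-\Phi\ge 0$ legitimises the limit $\sigma\to 0$ (showing in passing that the $\limsup$ defining $\Theta^2$ is in fact a limit).
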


\section{Proofs}

\subsection{Approximation of Spheres}

In this section, we will prove Theorem \ref{theorem approximation}. First we flatten the unit sphere slightly to have a flat segment on which we can easily glue two surfaces together. We denote by $D_r = B_r(0)$ the disc of radius $r$ around the origin in $\R^2$.

\begin{lemma}[``flattening a sphere"]\label{lemma flattening sphere}
Let $\eps>0$. Then there exists $\delta_0>0$ such that for every $0<\delta<\delta_0$ there exists a convex closed $C^\infty$-sphere $M_\eps\subset B_1(0)$ in $\R^3$ such that 
\begin{align*}
M_\eps\cap \left[ D_{2\delta}\times (0,1)\right]  &= \{x_3= 1-3\delta\} \cap \left[D_{2\delta}\times (0,1)\right],\\
M_\eps \setminus \left[D_{4\delta}\times (0,1)\right] &= S^2\setminus \left[D_{4\delta}\times (0,1)\right] 
\end{align*}
and
\[
\W(M_\eps) < 4\pi + \eps.
\]
\end{lemma}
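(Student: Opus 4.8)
The plan is to exploit the rotational symmetry of the construction and reduce everything to a single scalar profile function. Write the north cap of the unit sphere as a graph $x_3 = \sqrt{1-\rho^2}$ over the disc $D_{4\delta}$, where $\rho = |(x_1,x_2)|$, and look for $M_\eps$ in the form of a surface of revolution generated by a profile $u\colon[0,4\delta]\to\R$ with $u(\rho)=1-3\delta$ on $[0,2\delta]$ and $u(\rho)=\sqrt{1-\rho^2}$ on a neighbourhood of $\rho=4\delta$; outside the cylinder $D_{4\delta}\times(0,1)$ we simply keep $M_\eps=S^2$, and below the equator nothing is changed. I would obtain $u$ on the transition annulus $2\delta\le\rho\le4\delta$ by interpolating between the constant value $1-3\delta$ and the spherical profile using a fixed smooth cutoff $\varphi$ rescaled to the annulus, so that $u$ is $C^\infty$ and all derivatives of $u$ agree with those of $\sqrt{1-\rho^2}$ at the outer radius and vanish at the inner radius, guaranteeing that the three pieces glue to a closed $C^\infty$ surface.

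The Willmore estimate is the heart of the argument. Since $\W(S^2)=4\pi$ and $M_\eps$ coincides with $S^2$ outside $D_{4\delta}\times(0,1)$, only the modified cap contributes a difference, so it suffices to bound $\frac14\int_{\mathrm{cap}}H^2\d\H^2$ and compare it with the energy of the spherical cap it replaces. For a surface of revolution the principal curvatures are
\[
\kappa_1=\frac{u''}{(1+u'^2)^{3/2}},\qquad \kappa_2=\frac{u'}{\rho\,(1+u'^2)^{1/2}},\qquad H=\kappa_1+\kappa_2,
\]
and on the flat disc $D_{2\delta}$ we have $u'\equiv u''\equiv0$, so that region carries no mean curvature at all. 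I would check that the interpolation can be arranged so that the principal curvatures of the transition annulus stay of the same order as those of the spherical cap they replace, while the area of the annulus is $O(\delta^2)$; consequently $\int_{\mathrm{cap}}H^2\d\H^2\to0$ as $\delta\to0$. Choosing $\delta_0$ small enough that this cap energy is below $\eps$ then yields $\W(M_\eps)<4\pi+\eps$.

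Convexity and the $C^\infty$ gluing are the remaining points to settle. For the surface to be convex I would require $\kappa_1,\kappa_2\ge0$ along the whole profile, which amounts to choosing the interpolation so that $u$ is monotone and concave on the transition annulus, matching the concave spherical profile to infinite order at $\rho=4\delta$ and to the horizontal disc to infinite order at $\rho=2\delta$; the smooth cutoff $\varphi$ has to be selected with this sign condition in mind. The flat disc, the transition collar, and the unchanged lower portion of $S^2$ then assemble into a convex closed $C^\infty$ surface contained in $B_1(0)$.

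The main obstacle I anticipate is the simultaneous control of two competing requirements on the transition annulus: the profile must descend from the prescribed flat height to the spherical value and match $S^2$ to infinite order while remaining concave (for convexity), yet be gentle enough that the integrated squared mean curvature over the annulus is $o(1)$ as $\delta\to0$. Pinning down an explicit cutoff $\varphi$ for which the curvature bounds, the convexity sign condition, and the infinite-order matching all hold at once is the delicate calculation; once such a $\varphi$ is fixed, the energy bound follows by the area-times-curvature scaling and the choice of $\delta_0$.
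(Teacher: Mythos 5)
Your plan runs into a genuine obstruction, and it comes from a sign error in the geometry of the transition annulus. At the outer radius $\rho=4\delta$ the sphere sits at height $\sqrt{1-16\delta^2}=1-O(\delta^2)$, which is \emph{above} the prescribed flat height $1-3\delta$; so your profile $u$ would have to \emph{ascend} from $1-3\delta$ at $\rho=2\delta$ to $\sqrt{1-16\delta^2}$ at $\rho=4\delta$, not ``descend'' as you write. This makes your requirements mutually contradictory. Smooth matching with the flat disc forces $u'(2\delta)=0$, and convexity of the surface requires concavity of $x\mapsto u(|x|)$, whose Hessian has eigenvalues $u''$ and $u'/\rho$; so you need both $u''\le 0$ and $u'\le 0$, which together with $u'(2\delta)=0$ force $u$ to be non-increasing on $[2\delta,4\delta]$, whence $u(4\delta)\le 1-3\delta<\sqrt{1-16\delta^2}$. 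The interpolant you postulate therefore does not exist, and this is not an artefact of the graph ansatz: the flat disc $D_{2\delta}\times\{1-3\delta\}$ lies in the \emph{interior} of the convex hull of $S^2\setminus\left[D_{4\delta}\times(0,1)\right]$, so no convex surface at all can contain both pieces. Your energy mechanism fails at this height as well: if the principal curvatures on the annulus were of the same order as the sphere's, i.e.\ $|u''|\le C$, then $u'(2\delta)=0$ gives $|u'|\le 2C\delta$ there, so the profile could change height by at most $4C\delta^2$ across the annulus, far short of the required $3\delta-O(\delta^2)$. Any graph transition realising that height difference has $|u''|\sim 1/\delta$ somewhere, and by the scale invariance of $\W$ such a fixed-aspect-ratio dimple contributes $O(1)$ energy, not $o(1)$.

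What saves the lemma, and what the paper actually does, is to flatten at a much shallower height: it takes $f_\delta(x)=\sqrt{1-r_\delta(|x|)^2}$, where $r_\delta$ is a smoothed radial coordinate with $r_\delta\equiv 3\delta$ on $[0,2\delta]$ and $r_\delta(t)=t$ for $t\ge 4\delta$. The flat disc then sits at height $\sqrt{1-9\delta^2}=1-O(\delta^2)$, i.e.\ the cap is pushed down by only $O(\delta^2)$, not $O(\delta)$ (the ``$1-3\delta$'' in the statement is an inaccuracy of the paper itself; the proof of Theorem \ref{theorem approximation} only uses that there is a flat disc of radius $2\delta$ at a definite height near $1$). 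With this choice the profile is automatically non-increasing, all three terms of $D^2f_\delta$ are negative semi-definite (so convexity is free), and since $|f'|=O(\delta)$ on $(-4\delta,4\delta)$ while $r_\delta''\le 4/\delta$, the curvature of the transition annulus stays $O(1)$ on a set of area $O(\delta^2)$; the paper packages this as $C^1$- plus $W^{2,2}$-closeness of $f_\delta$ to the spherical profile, which yields $\W(M_\eps)<4\pi+\eps$. If you correct the height in your ansatz accordingly, your surface-of-revolution interpolation becomes essentially the paper's proof.
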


\begin{proof}
Take $f\in C^\infty(-1,1)$, $f(t) = \sqrt{1-t^2}$ and
\[
f_\delta: B_1(0)\to\R, \qquad f_\delta(x) = f\circ r_\delta(|x|) = \sqrt{1 - r_\delta^2(|x|)\,}
\]
where $r_\delta\in C^\infty[0,1]$ satisfies 
\[
r_\delta(t) = \begin{cases} 3\delta & t\leq 2\delta\\ t &t\geq 4\delta\end{cases}, \qquad 0\leq r_\delta' \leq 1, \qquad 0\leq r_\delta'' \leq \frac4{\delta}.
\]
Then
\begin{align}
\partial_i f_\delta(x) &=(f'\circ r_\delta)\,r_\delta'\,\frac{x_i}{|x|}\\
\partial_{ij}^2 f_\delta(x) &= (f''\circ r_\delta)\,(r_\delta')^2\,\frac{x_ix_j}{|x|^2} + (f'\circ r_\delta)\, r_\delta'' \,\frac{x_ix_j}{|x|^2}  + (f'\circ r_\delta)\, r_\delta'\,\left[\frac{\delta_{ij}}{|x|} - \frac{x_ix_j}{|x|^3} \right].\label{eq second der}
\end{align}
It is easy to see that $D^2f_\delta$ is negative semi-definite since all three terms in the sum in \eqref{eq second der} are negative semi-definite, so $f_\delta$ is concave. Thus 
\[
M^\delta := \{x\in S^2\:|\:x_3 \leq 0\} \cup \{(x,f(x))\:|\:x\in D_1\}
\]
is a convex sphere. The topological type can also be found through the Gaussian curvature integral which coincides for $f$ and $f_\delta$ since their boundary values agree (Gauss-Bonnet Theorem). 

When we denote $f_0(x) = \sqrt{1-|x|^2\,}$, we observe that $|f_\delta - f_0|\leq 3\delta$ and 
\begin{align*}
|\partial_if_\delta - \partial_i f_0| &= \big[(f'\circ r_\delta)\,r_\delta' - (f'\circ r_0) \big]\,\frac{x_i}{|x|}\\
|\partial^2_{ij} f_\delta - \partial^2_{ij}f_0| &= \big[(f''\circ r_\delta)\,(r_\delta')^2 - (f''\circ r_0|)\big]\,\frac{x_ix_j}{|x|^2} +  (f'\circ r_\delta) \,r_\delta'' \,\frac{x_ix_j}{|x|^2}\\
	&\qquad\qquad - \big[(f'\circ r_\delta)  r_\delta' - (f'\circ r_0)\big]\,\left[\frac{x_ix_j}{|x|^3}  -  \,\frac{\delta_{ij}}{|x|}\right]
\end{align*}
The first term is small since $x_i/|x|$ is bounded and $f'(0) = 0$, so we can choose $\delta$ small enough to make $f_\delta$ and $f_0$ close in $C^1(\overline{B_1(0)})$. Curvature prevents us from making them $C^2$-close, but they are clearly $W^{2,2}$-close since
\begin{align*}
\left|\left|\big[(f''\circ r_\delta)\,(r_\delta')^2 - (f''\circ r_0)\big]\,\frac{x_ix_j}{|x|^2}\right|\right|_{L^2} &\leq 2\,||f''||_{L^\infty(-4\delta,4\delta)}\sqrt{\pi\,(4\delta)^2}  \\
\left|\left|(f'\circ r_\delta) \,r_\delta'' \,\frac{x_ix_j}{|x|^2} \right|\right|_{L^2}&\leq ||f'||_{L^\infty(-4\delta, 4\delta)}\, \left(\int_{B_{4\delta}(0)}(4/\delta)^2\dx\right)^{1/2}\\
\left|\left| \big[(f'\circ r_\delta)  r_\delta' - (f'\circ r_0)\big]\,\left[\frac{x_ix_j}{|x|^3}  -  \,\frac{\delta_{ij}}{|x|}\right] \right|\right|_{L^2} &\leq 2 \left(\int_{B_{4\delta}(0)} \left(\frac{2}{|x|}\right)^2\cdot2\, \left[\,||f''||_{L^\infty(-4\delta,4\delta)}|x|\,\right]^2\dx\right)^{1/2}
\end{align*}
all become small linearly with $\delta$. Since mean curvature $H_f$, volume element $\ds_f$ and Willmore integrand $w_f$ of the graph 
\[
\Gamma_f = \{(x,f(x))\:|\:x\in B_1(0) \subset \R^2\}
\]
of $f$ are given by
\begin{align*}
H_f &= \frac{(1+f_y^2)\,f_{xx} - 2\,f_xf_y\,f_{xy} + (1+f_x^2)\,f_{yy}}{(1+f_x^2+f_y^2)^{3/2}},\\
\ds_f &= \sqrt{1+f_x^2 + f_y^2\,}\quad\text{ and}
\end{align*}
$w_f = H_f^2\ds_f$, we see that $||w_f - w_{g}||_{L^1}$ is small if $|f-g|_{C^1}$ and $||f-g||_{W^{2,2}}$ are both small for some $g\in W^{2,2}(D_1)$. So we can chose $\delta$ small enough to make this as small as we need for $g= f_\delta$.
\end{proof}
 
\begin{remark}
The radial symmetry of the sphere simplifies the calculations above, but in fact any $C^2$-surface can be locally flattened around a point when written as a graph over its tangent space. This might be useful for a more general argument when minimising varifolds have double points.
\end{remark}
  
Next we create the handles by which we will connect spheres.

\begin{lemma}[``flattening a catenoid"]\label{lemma flattening catenoid}
Let $R\gg 1$. Then there exists a connected orientable $C^\infty$-manifold $\Sigma \subset\R^3$ such that
\begin{align*}
&\Sigma \setminus Z_R
\quad = \quad \big(\{x_3 = R+1/2\} \cup \{x_3 = -(R+1/2)\}\big)\setminus \overline{Z_R}
\end{align*}
where $Z_R$ is the cylinder $Z_R = D_{\cosh(R+1)}\times(-R+1/2, R+1/2)$ and furthermore
\[
\W(\Sigma) = O(e^{-2R}), \qquad \int_\Sigma K\d\H^2 = -4\pi
\]
where $K$ denotes the Gaussian curvature of $\Sigma$.
\end{lemma}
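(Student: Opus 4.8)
The plan is to take the standard catenoid, write its two ends as radial graphs over the horizontal plane, and smoothly flatten each end into a horizontal plane on a thin annulus, in the spirit of Lemma \ref{lemma flattening sphere}. Because the catenoid is a minimal surface it contributes no Willmore energy, and because it is already nearly horizontal far out, the flattening will be cheap. Concretely, I would realise the catenoid as the surface of revolution of the profile $x_3 = \pm\phi(r)$ with $\phi(r) = \operatorname{arcosh}(r)$ for $r\geq 1$, whose waist sits at $r=1$, $x_3=0$. For the upper end I would fix a profile $g\in C^\infty$ with $g(r)=\phi(r)$ for $r\leq \cosh R$, with $g\equiv R+1/2$ for $r\geq \cosh(R+1)$, and with $g$ interpolating monotonically on the transition annulus $r\in[\cosh R,\cosh(R+1)]$ so that its slope vanishes at the outer edge; I would mirror this (with the analogous offset) at the lower end. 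Revolving these profiles and keeping the unmodified catenoid neck in between produces a connected, orientable, embedded $C^\infty$-surface $\Sigma$ which by construction equals the two horizontal planes $\{x_3=\pm(R+1/2)\}$ outside the cylinder $Z_R$, while contributing neither extra energy nor self-intersections on the flat and minimal portions.

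For the energy bound I would use the radial-graph formulas $\kappa_1 = g''(1+g'^2)^{-3/2}$ and $\kappa_2 = g'\,r^{-1}(1+g'^2)^{-1/2}$ for the principal curvatures of the surface of revolution $x_3=g(r)$, so that $H=\kappa_1+\kappa_2$ and $\W(\Sigma)=\tfrac{\pi}{2}\int H^2\,r\sqrt{1+g'^2}\,\mathrm{d}r$ taken over the two transition annuli only, the rest being flat or minimal. On such an annulus $r$ is of order $e^R$, and I must show $g'=O(e^{-R})$ and $g''=O(e^{-2R})$ there. Since the catenoid itself already satisfies $\phi'(r)=(r^2-1)^{-1/2}=O(e^{-R})$ and $\phi''(r)=O(e^{-2R})$, and since the required height gain is $O(1)$ spread over radial width $O(e^{R})$, the interpolation can be chosen with derivatives of the same orders. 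Consequently $|\kappa_1|,|\kappa_2|=O(e^{-2R})$, hence $H^2=O(e^{-4R})$, while the annulus has area $O(e^{2R})$; the product yields $\W(\Sigma)=O(e^{-2R})$, as claimed.

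For the total curvature I would invoke Gauss--Bonnet exactly as in Lemma \ref{lemma flattening sphere}. The flat planes have $K\equiv 0$, so $\int_\Sigma K\,\d\H^2$ reduces to the total curvature of the curved annular part. For a surface of revolution this integral equals $2\pi(\cos\psi_{\mathrm{in}}-\cos\psi_{\mathrm{out}})$, where $\psi$ denotes the inclination angle of the profile at the inner and outer radii; since the outer edges of $\Sigma$ are horizontal, just as the catenoid ends become horizontal as $r\to\infty$, these boundary angles coincide with those of the full catenoid, whose total Gaussian curvature is $-4\pi$. Therefore $\int_\Sigma K\,\d\H^2=-4\pi$.

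The main obstacle is the curvature estimate on the transition annulus: one must construct the interpolating profile $g$ so that it smoothly joins the catenoid's slope to a horizontal plane while keeping $g''=O(e^{-2R})$, and then verify that the angular curvature $\kappa_2=g'\,r^{-1}(1+g'^2)^{-1/2}$, despite the genuinely small slope, is also $O(e^{-2R})$ precisely because $r\sim e^{R}$ is large. The competition between the large area $O(e^{2R})$ and the small squared curvature $O(e^{-4R})$ is exactly what produces the sharp $O(e^{-2R})$ bound, so the derivative bounds on $g$ must be tracked quantitatively rather than merely qualitatively.
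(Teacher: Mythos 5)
Your proposal is correct, and the surface you build is the same as the paper's up to the change of variables $r=\cosh t$: the paper keeps the catenoid's radial profile $f(t)=\cosh t$ and flattens the \emph{height} function $g(t)$ (with $g(t)=t$ for $|t|\le R$, $g\equiv R+1/2$ for $|t|\ge R+1$, $0<g'\le 1$, $-4\le g''\le 0$), which is exactly your radial-graph interpolation read in the parameter $t$. Your energy estimate is equivalent in content: the paper computes $H$ for the surface of revolution, simplifies it using the catenoid identity $ff''-(f')^2-1=0$, and bounds the integral over the transition band $R\le t\le R+1$ by $O(e^{-2R})$; your bookkeeping $g'=O(e^{-R})$, $g''=O(e^{-2R})$, hence $\kappa_1,\kappa_2=O(e^{-2R})$ against area $O(e^{2R})$, is the same estimate in the $r$-variable, and a profile with these bounds does exist --- e.g.\ take $g'=\phi'\eta$ with a smooth cutoff $\eta$ on the annulus satisfying $|\eta'|=O(e^{-R})$, normalised so that $\int g'\,\mathrm{d}r=1/2$; then $g''=\phi''\eta+\phi'\eta'=O(e^{-2R})$ automatically, so the obstacle you flag is routine. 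The one genuinely different step is the total curvature: the paper shows $K\le 0$ and that the Gauss map hits every direction of $S^2$ (minus the poles) exactly once, so $\int_\Sigma|K|\d\H^2=\H^2(S^2)=4\pi$; you instead use the surface-of-revolution identity $\int K\d\H^2 = 2\pi\left(\cos\psi_{\mathrm{in}}-\cos\psi_{\mathrm{out}}\right)$ per band, i.e.\ the fact that $K\d\H^2$ integrates to a boundary term depending only on the profile's inclination angles (vertical at the waist, horizontal at the flat ends, giving $-2\pi$ per half and $-4\pi$ in total). Your route is somewhat more robust, since it needs neither $K\le 0$ nor injectivity of the Gauss map and therefore imposes no sign or monotonicity hypotheses on the interpolation; the paper's $t$-parametrisation, conversely, makes the conditions $0<g'\le1$, $-4\le g''\le0$ trivial to prescribe but relies on them for its Gauss-map argument.
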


\begin{proof}
Define the surface of revolution
\[
\Sigma = \left\{\left.\begin{pmatrix}f(t)\,\cos\phi\\ f(t)\,\sin\phi\\ g(t)\end{pmatrix}\:\right|\:t, \phi\in\R\right\}.
\]
If $f(t) = \cosh(t)$ and $g(t) = t$, $\Sigma$ is the usual catenoid. We consider $f=\cosh$ and an even $C^\infty$-function $g$ satisfying
\[
g(t) = \begin{cases}t & |t|\leq R\\ R+1/2 & R\geq R+1\end{cases}, \qquad 0< g'(t)\leq 1 \text{ for }|t|< R+1, \qquad -4\leq g''(t)\leq 0 \text{ for }t\geq 0.
\]
Then clearly $\Sigma$ is connected as the continuous image of a connected set and given as the union of two planes outside the cylinder $Z_R$. The volume element $\ds$ and the mean curvature of $\Sigma$ are
\begin{align*}
\ds &= f\sqrt{(g')^2+(f')^2},\\
H &= \frac{ff''g' - ff'g'' - g'(f')^2-(g')^3}{f\,\left[(f')^2 + (g')^2\right]^{3/2}}\\
	&= \frac{g'(ff'' - (f')^2 - 1) +g'(1-(g')^2) - ff'\,g''}{f\,\left[(f')^2 + (g')^2\right]^{3/2}}\\
	&= \frac{g'(1-(g')^2) - ff'\,g''}{f\,\left[(f')^2 + (g')^2\right]^{3/2}}
\end{align*}
since $ff'' - (f')^2 - 1 = 0$ for $f=\cosh$. Thus
\begin{align*}
\W(\Sigma) &= 2\pi\int_0^\infty \frac{\left[g'\left(1-(g')^2\right) - ff'\,g''\right]^2}{f\,\left[(f')^2 + (g')^2\right]^{5/2}}\d t\\
	&\leq 4\pi\int_R^{R+1} \frac{(g')^2(1-(g')^2)^2}{f\,\left[(f')^2 + (g')^2\right]^{5/2}} + \frac{f\,(f')^2 (g'')^2}{\left[(f')^2 + (g')^2\right]^{5/2}}\d t\\
	&\leq 4\pi \int_R^{R+1} \frac{1}{f\,(f')^5}\d t + 4\pi \int_R^{R+1}\frac{f\,(g'')^2}{|f'|^3}\d t\\
	& = O(e^{-2R}).
\end{align*}
It remains to show that the total Gaussian curvature is $-4\pi$. When we orient $\Sigma$ by choice of the normal vector
\[
\nu = \frac{1}{f\,\sqrt{(f')^2 + (g')^2\,}} \begin{pmatrix} -f\,g'\,\cos\phi\\ - f\,g'\,\sin\phi\\ f\,f'\end{pmatrix}
\]
we see that every unit vector $\nu = (\sin\theta) e_\phi + (\cos\theta) e_z \neq (0,0,\pm1)$ is the normal $\nu_x$ at the unique point $x\in \Sigma$ determined by the $\phi$-coordinate and $t$ given by
\[
\tan\theta = - \frac{g'(t)}{f'(t)}.
\]
This is uniquely solvable except for $\tan\theta = 0$ by construction of $g$. We know that
\[
K = \frac{-(g')^2f'' + f'g'g''}{f\,\left[(f')^2 + (g')^2\right]^2} \leq 0\qquad (\text{since }f''\geq 0, f'g''\leq 0)
\]
is the determinant of the Gauss map $G:\Sigma\to S^2$, $G(x) = \nu_x$, so
\[
4\pi = \H^2(S^2) = \H^2(G(\Sigma)) = \int_\Sigma|K|\d\H^2 = - \int_\Sigma K\d\H^2.
\]
\end{proof}

Now we are ready to prove this section's main statement.

\begin{proof}[Proof of Theorem \ref{theorem approximation}]
We first give the proof for $m=2$. Let $\beta>0$ to be chosen later depending on $\eps, \delta>0$. Take $M_\beta$ constructed like in Lemma \ref{lemma flattening sphere}, $\delta>0$ such that $M_\beta$ coincides with the plane $\{x_3 = 1-3\delta\}$ inside the cylinder $D_{2\delta}\times(0,1)$. We may specify $\delta$ to be taken sufficiently small later. Take $0<\rho<\delta/2$ such that there are $g+1$ points $x_1,\dots, x_{g+1}$ in $D_{\delta/2}$ such that the discs $D_\rho(x_i)$ are pairwise disjoint.
 
Choose $R >0$ and $\Sigma$ like in Lemma \ref{lemma flattening catenoid} such that $\W(\Sigma) = O(e^{-2R})<\beta$. Then choose $\eta>0$ such that $\eta\cosh(R+1)<\rho$ and $\eta R< \delta^3$. Finally, define
\[
r = \frac{1- 3\delta -(2R+1)\eta}{1-3\delta}<1, \qquad \tilde M = M_\beta \cup r\cdot M_\beta.
\] 
Since $M_\beta$ is convex, this is  a smooth embedded manifold. By construction, inside the cylinders
\[
Z_i := D_\rho(x_i) \times \{0 < z < 1\}, \qquad i = 1,\dots, g+1
\]
$\tilde M$ is given by the union of the planes $\{z= 1-3\delta\}$ and $\{z= 1-3\delta - (2R+1)\eta\}$ which have separation $(2R+1)\eta$. Since the $Z_i$ are disjoint, we can replace $\tilde M$ inside each cylinder by
\[
x_i + \frac{1+r}2 \,e_z + \eta\cdot \big(\Sigma \cap Z_R\big).
\]
We call the resulting manifold $M$. It is clear that $M$ is a connected surface. Since both the total curvature integral and Willmore's energy are invariant under spacial rescaling and since $M$ is flat on the remaining segments, we have
\begin{align*}
\W(M) &= 2\,\W(M_\beta) + (g+1)\,\W(\Sigma) \hspace{12mm}< (g+3)\,\beta\\
\int_MK\d\H^2 &= 2\,\int_{M_\beta} K\d\H^2 + (g+1) \int_{\Sigma} K \d\H^2
	= 2\cdot 4\pi + (g+1)\cdot(-4\pi)
	= 4\pi\,(1-g)
\end{align*}
so that $M$ is a closed smoothly embedded orientable genus $g$ surface with small Willmore energy. Unfortunately, $M\not \subset B_1(0)$ and it is not clear after our modifications whether $\H^2(M) = 8\pi$. If $\H^2(M)>8\pi$, we only need to choose $\beta = \eps/(g+3)$ and set
\begin{equation}\label{rescaling to fit}
M_g = \sqrt{\frac{8\pi}{\H^2(M)}}\cdot M.
\end{equation}
The more complicated case is $\H^2(M)\leq 8\pi$. Then at least
\[
\H^2(M) \geq \left( 1 + r^2\right) \H^2\left(S^2 \setminus \left[D_{4\delta}\times (0,1)\right]\right) \geq 8\pi - C\,\delta^2
\]
since $\eta R<\delta^3$ and thus $r\geq 1- \delta^2$. Now consider only the inner sphere, which is still spherical around its south pole. Take a function $h\in C_c^\infty(D_r)$ on a small disc such that $h\geq 0$ and $h\not\equiv 0$. Then we may replace a neighbourhood of the south pole of the inner sphere by
\[
\tilde \Sigma^t = \left\{\left(x, -\sqrt{r^2 - |x|^2} + t\,h\left(\frac{x}{\alpha \sqrt t}\right)\right)\:\bigg|\:x\in B_r(0)\right\}.
\]
The resulting surface is denoted by $M^t$. Again, this does not change the topological type, but it changes the area and the Willmore functional by
\[
\H^2(M^t)\geq \H^2(M) + c\,t^2, \qquad \W(M_t) \leq \W(M) + C\,t
\]
as is computed in the proof of \cite[Proposition 2]{Muller:2013vz}, at least for suitable spherically symmetric $h$. Thus we can take $t = O(\delta)$ such that $\H^2(M^t)>8\pi$ and define $M_g$ again by \eqref{rescaling to fit}, this time choosing both $\beta$ {and} $\delta$ small enough depending on $\eps>0$.

In the case of $m\in \N$, $m\geq 3$, we simply consider $m$ concentric spheres and connect them by $m+ g-1$ catenoids. The modification at the south pole can always be done only for the innermost sphere. To picture that this procedure induces the correct topology, consider first connecting the outer spheres by $g+1$ catenoids. Then we connect the third sphere to the second by one catenoid. This, however, only blows up a small topological disc to a large one since the union of a catenoid and a sphere is homeomorphic to a sphere with a small disc around the north pole removed, i.e.\ a disc.
\end{proof}

\begin{remark}
If we fix a genus $g$, then we can even find a $C^2$-smooth map $f: (0,\eps)\times M_\eps \to \R^3$ which maps $(t,M_\eps)$ to $M_t$ constructed above. In particular, $f(t,M_\eps)$ is a $C^\infty$-smooth manifold for all $t\in (0,\eps)$. Clearly, the images converge as varifolds to an $m$-fold covered sphere as $t\searrow 0$. We can continue the evolution past the $m$-fold covered sphere in various ways. This describes a singularity in a geometric flow which may occur with decreasing Willmore energy in finite time. It is unclear whether such singularities may appear in the gradient flow of the Willmore functional.
\end{remark}

\subsection{Proofs of the Corollaries}
Let us use Theorem \ref{theorem approximation} to illustrate phenomena occurring when we minimise curvature energies under area constraint in the unit ball.

\begin{proof}[Proof of Corollary \ref{theorem willmore}]
By Theorem \ref{theorem approximation}, there exists a sequence $N_k\in \M:= \M_{g,4m\pi, B_1(0)}$ such that $\W(N_k) < 4m\pi + 1/k$. So 
\[
\inf\left\{\W(M) \:\big|\: M\in \M\right\} \leq 4m\pi.
\]
Now let $M_k$ be a minimising sequence in $\M$. Take a subsequence of $M_{k}$. Due to Allard's compactness theorem \cite{Allard:1972vh}, there exists an integral varifold $V$ with square integrable mean curvature $H$ such that a further subsequence converges to $V$ as varifolds and 
\[
\W(V) \leq \limsup_{k\to\infty}\W(M_k) = \inf\left\{\W(M) \:\big|\: M\in \M\right\} \leq 4m\pi.
\]
The convergence of varifolds implies the convergence of their mass measures as Radon measures, so $\mu_V(\overline{B_1(0)}) = 4m\pi$ and $\mu_V(\R^n\setminus\overline{B_1(0)}) = 0$, whence $\mu_V = m\cdot \H^{2}|_{S^2}$ by Lemma \ref{lemma mueller roeger}. Since every subsequence has a further subsequence which converges to the same limit and varifold convergence is topological (as a convergence of Radon measures), we see that the whole sequence converges.
\end{proof}

\begin{proof}[Proof of Corollary \ref{theorem negative}]
Since $\int_{M}|H|^2\d\H^2 > 4\H^2(M)$ by Lemma \ref{lemma mueller roeger} for manifolds in $B_1(0)$, a manifold $M$ satisfying
\[
4\chi_H\H^2(M) \geq \E(M) = \chi_H\int_{M}|H|^2\d\H^2 + 4\pi\,\chi_K\int_{M}K\d\H^2 \geq 4\chi_H\H^2(M) + 4\pi \chi_K\,(1-g) 
\]
has genus $g=0$. As before
\[
\inf \left\{\E(M)\:|\: M\in \M_{4\pi m, B_1(0)}\right\} = 16\pi m\,\chi_H -4\pi\,|\chi_K|
\]
is realised by smooth spheres converging to a multiplicity $m$ sphere. As noted before, a smooth multiplicity $m$-sphere $V: = m\cdot \H^2|_{S^2\otimes TS^2}$ has total Gaussian curvature $\int K\d V = 4\pi m$. Thus
\[
\E(V) = 4\pi m\,(4\chi_H - |\chi_K|) < 16\pi m\,\chi_H -4\pi\,|\chi_K| = \lim_{k\to\infty}\E(M_k).
\]
If $\chi_K<- 4\chi_H$, then multiplicity $m$-spheres illustrate that $\E$ is not bounded below on the varifold closure of smooth surfaces, since $m\cdot \H^2|_{S^2}$ can be approximated with finite energy $\E$.

Assume that $M_k$ is a sequence of smooth surfaces with energy $\E$ bounded by $C$ and $M_k$ converges to a varifold $V$. This implies that their genera and Willmore energies are bounded by
\[
g \leq \frac{C}{4\pi\,|\chi_K|}+1, \qquad \W(M_k) \leq \E(M_k) + 4\pi\,|\chi_K| 
\]
so 
\[
\int_{M_k}|A|^2 \d\H^2 = \int_{M_k}|H|^2 - 2K\d\H^2 \leq 4 \left[\,\E(M_k) + 4\pi\,|\chi_K|\,\right] + 8\pi\left[\frac{C}{4\pi\,|\chi_K|}+1\right].
\]
This is uniformly bounded in $k$, so $V$ is a curvature varifold \cite{hutchinson19862nd}. Clearly
\[
\E(V) \geq \chi_H \W(V) - |\chi_K| \,\W(V) \geq - |\chi_K| \int|A|^2\d V
\]
is a uniform bound from below in $\overline{\B_{S,\Omega,C}}$.
\end{proof}

The discontinuity is mathematically meaningful. As the catenoid collapses away, two spheres remain in the limit. The Gaussian integral does not see that these spheres happen to coincide.

\begin{proof}[Proof of Corollary \ref{theorem positive}]
To approximate a multiplicity one-sphere by manifolds $M_k$, insert a sphere of radius $1/k$ into a sphere of radius $\approx r$ and connect the two by $g_k$ catenoids, $g_k\to \infty$. Willmore's energy is close to $8\pi$, so the total energy is 
\begin{align*}
\E(M_k) &= \int_{M_k}\chi_{H}^k (H-H_0^k)^2 \d\H^2 + \int_{M_k} \chi_K^k\,K\d\H^2 \\ 
	&\leq C\,\int_{M_k}2\big(|H|^2 + |H_0^k|^2\big) \d\H^2 + \delta \int_{M_k\cap \{K<0\}} K\d\H^2 + C \int_{M_k\cap\{K>0\}} K\d\H^2\\
	&\leq 2 C\left( 8\pi+1 + C^2\right) + \delta \int_{M_k} K\d\H^2 + \frac{C}4 \int_{M_k\cap \{K>0\}} H^2\d\H^2\\
	&\leq 2C\left(8\pi + 1 +C^2\right) +4\pi \delta(1-g_k) + C^3/4
\end{align*}
since $K = \lambda_1\lambda_2 \leq (\lambda_1+\lambda_2)^2/4 = H^2/4$ if $\lambda_1$ and $\lambda_2$ have the same sign. Clearly, this goes to $-\infty$ as $g_k\to \infty$.
To approximate a Dirac measure, we approximate a multiplicity $m$-sphere of radius $r_m = r/\sqrt{m}$ with genus $g$-manifolds $\tilde M_m$, $g \gg m$.
\end{proof}

\subsection{Connectedness} We conclude this article with a more positive result.

\begin{proof}[Proof of Theorem \ref{theorem compactness with connected support}]
Since varifold convergence is weak* convergence and the pre-dual of varifolds is the space of continuous functions on a compact manifold $\overline\Omega \times G(3,2)$ (in particular, separable), the topology is locally metrisable. Therefore, compactness and sequential compactness coincide on the bounded set we consider.

Take a sequence of varifolds $V_k$ with mass measures $\mu_k$. By Allard's compactness theorem \cite{Allard:1972vh}, there is a subsequence converging to a limit varifold $V$ with mass measure $\mu$. Take a subsequence of $V_k$ (not relabelled) for which the supports $\spt(\mu_k)$ of the mass measures converge to a compact set $K\subset\overline{B_R(0)}$ in the Hausdorff distance. We will show that points which lie in the $K\setminus\spt(\mu_V)$ are atoms of size at least $4\pi$ of a finite measure, thus there can only be finitely many such points. If $\spt(\mu_k)$ is connected for all $k$, also the limit $K$ is connected and there cannot be any isolated points, so $K\subset\spt(\mu)$. The reverse inclusion always holds, so $\spt(\mu) = K$ is connected.
 
Assume that $x\in K \setminus \spt(\mu)$. Denote the weak* limit $\alpha = \lim_{k\to\infty} |H_k|^2\cdot\mu_k$ (for a subsequence along which it exists). We can take a sequence $x_k\in \spt(\mu_k)$, $x_k\to x$ with 
\[
\theta_k(x_k) = \Theta^2_{\mu_k}(x_k) = \limsup_{r\to0}\frac{\mu_k(B_r(x_k))}{\pi r^2} \geq 1
\]
since the Lebesgue points of $\theta_k$ lie dense. Take $\rho>0$ with $\mu(\overline{B_\rho(x)})=0$. Due to Lemma \ref{lemma li-yau} applied to $V_k$, we get
\[
1\leq \liminf_{k\to\infty} \frac{\mu_k(B_{\rho/2}(x_k))}{\pi (\rho/2)^2} + \frac1{4\pi}\int_{B_{\rho/2}(x_k)}|H_k|^2\d\mu_k \leq \liminf_{k\to\infty} \frac1{4\pi}\int_{B_{\rho}(x)}|H_k|^2\d\mu_k\leq \alpha\left(\overline{B_\rho(x)}\right)
\]
since the first term vanishes in the limit and $B_{\rho/2}(x_k)\subset B_\rho(x)$ for all large $k\in\N$. Taking $\rho\to 0$ shows that $\alpha(\{x\})\geq 4\pi$ and concludes the proof. Compare \cite[Lemma 3.5]{DW_conv} for a phase field version of this argument.

A diagonal sequence shows that the subclass of varifolds which arise as the weak* limits of embedded connected $C^2$-manifolds with suitable bounds is closed, hence it is compact as well.
\end{proof}

\section*{Acknowledgements}

I would like to thank P.W.~Dondl for his encouragement and for many stimulating discussions. I would also like to thank Durham University for financial support through a Durham Doctoral Studentship.

\bibliographystyle{alphaabbr}
\bibliography{full_corrected}

\end{document}